\newcommand{\C}{\mathbb{C}}
\newcommand{\E}{\mathbb{E}}
\newcommand{\F}{\mathbb{F}}
\newcommand{\N}{\mathbb{N}}
\newcommand{\Q}{\mathbb{Q}}
\newcommand{\T}{\mathbb{T}}
\newcommand{\Z}{\mathbb{Z}}
\newcommand{\Div}{\operatorname{Div}}
\newcommand{\Pic}{\operatorname{Pic}}
\newcommand{\Gal}{\operatorname{Gal}}
\renewcommand{\ker}{\operatorname{Ker}}
\newcommand{\ord}{\operatorname{ord}}
\newcommand{\Tr}{\operatorname{Tr}}
\newcommand{\Jac}{\operatorname{Jac}}
\newcommand{\GLFl}{\operatorname{GL}_2(\F_\ell)}
\newcommand{\PGLFl}{\operatorname{PGL}_2(\F_\ell)}
\newcommand{\SLFl}{\operatorname{SL}_2(\F_\ell)}
\newcommand{\charf}{\mathbbm{1}}
\newtheorem{thm}{Theorem}
\newtheorem{lem}[thm]{Lemma}
\theoremstyle{definition}
\theoremstyle{definition}
\theoremstyle{definition}
\title{Computing modular Galois representations}
\author{Nicolas Mascot\thanks{IMB, Universit\'e Bordeaux 1, UMR 5251, F-33400 Talence, France.
CNRS, IMB, UMR 5251, F-33400 Talence, France.
INRIA, project LFANT, F-33400 Talence, France.}
\\ \href{mailto:nmascot@math.u-bordeaux1.fr}{nmascot@math.u-bordeaux1.fr}}
\begin{document}

\maketitle

\begin{abstract}
We compute modular Galois representations associated with a newform $f$, and study the related problem of computing the coefficients of $f$ modulo a small prime $\ell$. To this end, we design a practical variant of the complex approximations method presented in \cite{EC}. Its efficiency stems from several new ingredients. For instance, we use fast exponentiation in the modular jacobian instead of analytic continuation, which greatly reduces the need to compute abelian integrals, since most of the computation handles divisors. Also, we introduce an efficient way to compute arithmetically well-behaved functions on jacobians, a method to expand cuspforms in quasi-linear time, and a trick making the computation of the image of a Frobenius element by a modular Galois representation more effective. We illustrate our method on the newforms $\Delta$ and $E_4 \cdot \Delta$, and manage to compute for the first time the associated faithful representations modulo $\ell$ and the values modulo $\ell$ of Ramanujan's $\tau$ function at huge primes for $\ell \in \{ 11,13,17,19,29\}$. In particular, we get rid of the sign ambiguity stemming from the use of a non-faithful projective representation as in \cite{Bos}. As a consequence, we can compute the values of $\tau(p) \bmod 2^{11} \cdot 3^6 \cdot 5^3 \cdot 7 \cdot 11 \cdot 13 \cdot 17 \cdot 19 \cdot 23 \cdot 29 \cdot 691 \approx 2.8 \cdot 10^{19}$ for huge primes $p$. These representations lie in the jacobian of modular curves of genus up to $22$.
\end{abstract}

\newpage

\renewcommand{\abstractname}{Acknowledgements}
\begin{abstract}
I would like to heartily thank my advisor J.-M. Couveignes for offering me this beautiful subject to work on. More generally, I would like to thank people from the Bordeaux 1 university's IMB for their support, with special thoughts to B. Allombert, K. Belabas, H. Cohen and A. Enge, as well as the PlaFRIM team. Finally, I thank B.Edixhoven for his remarks on earlier versions of this article, A. Page for helping me to make explicit the similarity classes in $\GLFl$, and T. Selig for proofreading my English.

This research was supported by the French ANR-12-BS01-0010-01 through the project PEACE, and by the DGA ma{\^\i}trise de l'information.
\end{abstract}

\newpage


\section{Introduction}

Consider a non-CM newform $f = q + \sum_{n \geqslant 2} a_n q^n \in S_k\big(\Gamma_1(N)\big)$ of weight $k \in  \N_{\geqslant 2}$, level $N \in \N^*$, and nebentypus $\varepsilon$. Denote by $K_f = \Q(a_n, n\geqslant 2)$ the number field spanned by its $q$-expansion coefficients. Let $\mathfrak{l}$ be one of its finite primes, lying over some rational prime $\ell \in \N$, let $K_{f,\mathfrak{l}}$ be the corresponding completion, and let $\Z_{K_{f,\mathfrak{l}}}$ be its ring of integers. Thanks to P. Deligne \cite{Del}, we know that there exists a continuous $\ell$-adic Galois representation
\[ G_\Q \longrightarrow \operatorname{GL}_2(\Z_{K_{f,\mathfrak{l}}}) \]
of the absolute Galois group $G_\Q = \Gal(\overline \Q / \Q)$ of $\Q$, which is unramified outside $\ell N$, and such that for all rational primes $p \nmid \ell N$, the image of the Frobenius element corresponding to any prime lying above $p$ has characteristic polynomial
\[ X^2 - a_p X + \varepsilon(p) p^{k-1} \in \Z_{K_{f,\mathfrak{l}}}[X]. \]

Assume now that $\mathfrak{l}$ has inertia degree $1$. By reducing modulo $\mathfrak{l}$, we get a mod $\ell$ representation 
\[ \rho_{f,\mathfrak{l}} \colon G_\Q \longrightarrow \GLFl. \]
\hypertarget{BadPrimes}By theorem $2.1$ of \cite{RibSL2} and lemma 2 of \cite{SwD}, for almost every $\mathfrak{l}$, the image of this representation contains $\SLFl$, and in particular this representation is irreducible. We will exclude the finitely many $\mathfrak{l}$ for which this fails to hold from now on. For instance, if we choose $f = \Delta$, we exclude $\ell = 23$, amongst others.

Further assume now that $\ell \geqslant k+1$ and that $\ell \nmid N$. In this case, this mod $\ell$ representation can then be constructed in a more concrete way as follows. Being an eigenform, $f$ has a system of Hecke eigenvalues $\lambda_f \colon \T_{k,N} \longrightarrow \Z_{K_f}$ such that
\[ Tf = \lambda_f(T) f  \qquad \forall \, T  \in \T_{k,N}, \]
where $\T_{k,N} = \Z[T_n, \, n \geqslant 2]$ denotes the Hecke algebra acting on cuspforms of weight $k$ and level $N$, and where $\Z_{K_f}$ is the ring of integers of $K_f$. Reducing modulo $\mathfrak{l}$, we get a ring morphism $\lambda_{f,\mathfrak{l}} \colon \T_{k,N} \longrightarrow \F_{\ell}$. By a weight-lowering theorem (cf \cite{Gross}, proposition 9.3 part 2), there exists another ring morphism $\mu_{f,\mathfrak{l}} \colon \T_{2,\ell N} \longrightarrow \F_{\ell}$ such that $\lambda_{f,\mathfrak{l}}(T_p) = \mu_{f,\mathfrak{l}}(T_p) \in \F_\ell$ for all rational primes $p$. This other Hecke algebra $\T_{2,\ell N}$ also acts on the jacobian $J_1(\ell N)$ of the modular curve $X_1(\ell N)$, so we can consider the subspace
\[ V_{f,\mathfrak{l}} = \bigcap_{T \in \T_{2,\ell N}} \ker \big(T - [\mu_{f,\mathfrak{l}}(T)] \big) \big\vert_{J_1(\ell N)[\ell]} \]
of the $\ell$-torsion of $J_1(\ell N)$. By section 7.9 of \cite{DS}, $V_{f,\mathfrak{l}}$ is defined over $\Q$, and by theorem 9.2 of \cite{Edi}, it has dimension $2$ as a vector space over $\F_\ell$, so that the action of $G_\Q$ on its points yields a Galois representation $\rho'_{f,\mathfrak{l}}$ into $\GLFl$ which cuts out the Galois number field $L = \overline \Q^{\ker \rho'_{f,\mathfrak{l}}} = \Q(P, \ P \in V_{f,\mathfrak{l}})$, as shown below :
\[\xymatrix @R=4pc @C=2pc { G_\Q \ar[r]^-{\rho'_{f,\mathfrak{l}}} \ar@{->>}[d] & GL(V_{f,\mathfrak{l}}) \simeq \GLFl . \\ \Gal(L/\Q) \ar@{^{(}->}[ur] & } \]

Of course, if we had $k = 2$ in the first place, there is no need to appeal to the weight-lowering theorem, and the subspace $V_{f,\mathfrak{l}}$ already exists in the $\ell$-torsion of $J_1(N)$ instead of $J_1(\ell N)$.

This representation $\rho'_{f,\mathfrak{l}}$ is unramified outside $\ell N$ (cf \cite{DS}, theorem 9.6.5). Furthermore, it follows from the Eichler-Shimura relation (cf \cite{DS}, theorem 8.7.2) that for $p \nmid \ell N$, the image of the Frobenius element $\left( \frac{L/\Q}{p} \right)$ by $\rho'_{f,\mathfrak{l}}$ has characteristic polynomial
\[ X^2 - a_p X + \varepsilon(p) p^{k-1} \in \F_\ell[X], \]
where $a_p$ and $\varepsilon(p)$ have both been reduced modulo $\mathfrak{l}$. By the Brauer-Nesbitt theorem (cf \cite{CuRe}, theorem 30.16), $\rho_{f,\mathfrak{l}}$ is therefore isomorphic to the semisimplification of $\rho'_{f,\mathfrak{l}}$, so that $\rho'_{f,\mathfrak{l}}$ is actually irreducible and thus realises $\rho_{f,\mathfrak{l}}$ indeed.

\bigskip

It would be interesting to compute explicitly these Galois representations $\rho_{f,\mathfrak{l}}$ for several reasons: first, simply for the sake of the Galois representation itself, next, because the number field $L$ will often\footnote{Under our hypotheses, the image of the representation $\rho_{f,\mathfrak{l}}$ is clealry the subgroup of $\GLFl$ made up of the matrices whose determinant is of the form $\varepsilon(n) n^{k-1}$. By ``often'', we mean that this subroup is often the whole of $\GLFl$.} be an explicit solution to the inverse Galois problem for $\GLFl$ (cf \cite{BosGal1}, \cite{BosGal2}) with controlled ramification, and even for the Gross problem, and, last but not least, because it gives a fast way of computing the $q$-expansion coefficients $a_p$ of $f$ modulo $\mathfrak{l}$. Letting $\ell$ vary, we thus obtain a Schoof-like algorithm (cf \cite{Schoof}) to compute $q$-expansions of newforms, as bounds on the coefficients $a_p$ are well-known.

Computing these representations is the goal pursued by the book \cite{EC}. The idea is to approximate $\ell$-torsion divisors representing the points of $V_{f,\mathfrak{l}}$. To compute these torsion divisors, the book \cite{EC} suggests two approaches: a probabilistic one \cite{CH13}, which creates $\ell$-torsion divisors by applying Hecke operators to random divisors on the modular curve over small finite fields, and a deterministic one \cite{CH12}, which relies on fast exponentiation to create approximations of torsion divisors on the modular curve over $\C$. However, neither of these two methods is practical at all, although their theoretical complexities are polynomial in $\ell$.

In \cite{Bos}, J. Bosman presents a practical variant of the complex method. It uses an analytic continuation method (cf for instance \cite{NumCont}) instead of fast exponentiation. To deal with the Abel-Jacobi map
\[\jmath \colon \Div^0\big(X_1(\ell N)\big)(\C) \rightarrow J_1(\ell N)(\C),\]
J. Bosman has to compute a lot of abelian integrals. This leads to precision problems as this requires summing $q$-series very close to the edge of the convergence disk, and because of the singular locus of $\jmath$, of which little is known. J. Bosman still manages to compute representations up to level $17$ and $19$, but he only gets projective Galois representations in $\PGLFl$ instead of $\GLFl$, which means he gets the coefficients $a_p \bmod \mathfrak{l}$ up to a sign only (cf for instance the table on the very first page of \cite{EC}).

To our understanding, the implementation \cite{Zeng} by J. Zeng of the probabilistic method suffers from the same limitations. J. Zeng computes polynomials defining projective non-faithful representations, but seems not to compute actual coefficients.

In this paper, we present another improved, practical and deterministic version of the complex approximations approach, which is provable in that the singular locus of $\jmath$ is no longer a problem. It has far fewer precision issues, as it computes abelian integrals only along very short paths well inside the convergence disks, and uses K. Khuri-Makdisi's algorithms \cite{Mak1} \cite{Mak2} for fast exponentiation in the jacobian. Consequently, we get approximations of torsion divisors fairly easily. This allows us to compute the full Galois representations for the prime levels $17 \leqslant \ell \leqslant 29$, which, to our knowledge, had never been done before. As a consequence, we can for instance find the signs which were missing in J. Bosman's results. We give a detailed description of our method in the following sections of this paper.

 Like J. Bosman, we limit ourselves to prime levels $\ell$ for commodity\footnote{However, our algorithm could easily be extended to general levels $N$.}. This implies that we can only use our algorithm to compute Galois representations attached to newforms of weight $2$ and level $\ell$, or to newforms of arbitrary even weight but of level $1$. Typically, we use it on the newform $\Delta$, which is of weight $12$ and level $1$. As the genus of $X_1(\ell)$ is $0$ for $\ell \leqslant 7$, we will assume $\ell \geqslant 11$ throughout this paper. The genus of $X_1(\ell)$ is then $g = \frac{(\ell-5)(\ell-7)}{24}$.

In the next section, we present a quick review of our algorithm. Then, in section $3$, we give a detailed description of the key steps. Finally, in the last section, we present actual computations of Galois representations and of coefficients of newforms, and we give complexity estimates.

\newpage

\section{Outline of the algorithm}

Our first task consists in computing the period lattice $\Lambda$ of $X_1(\ell)$, which we do by integrating cuspforms along modular symbols. Using our knowledge of the action of the Hecke algebra on modular symbols, we then deduce an analytic representation of the $\ell$-torsion subspace $V_{f,\mathfrak{l}} \subset J_1(\ell)(\C) = \C^g / \Lambda$. Next, we find a way to invert the Abel-Jacobi map $\jmath$, so that we may, for each $x \in J_1(\ell)(\C)$, find a null-degree divisor $D_x$ on $X_1(\ell)$ such that $\jmath(D_x) = x$, and especially so for two $\ell$-torsion divisor classes $x_1$ and $x_2$ forming a basis of the two-dimensional $\F_\ell$-subspace $V_{f,\mathfrak{l}}$.

We first compute a high-precision floating point approximation of the period lattice $\Lambda$ by computing a $\Z$-basis of the singular homology $H_1\big( X_1(\ell)(\C),\Z \big)$ made up of modular symbols (cf \cite{Stein} or \cite{Cremona}), along which we integrate term-by-term the $q$-expansions of a basis $(f_i)_{1\leqslant i \leqslant g}$ of cuspforms of weight $2$ In order to get a very accurate result, this requires $q$-expanding the $f_i$ to high precision, which we show how to do quickly \hyperlink{Detail_qexp}{below}. Then, by computing the Hecke action on $J_1(\ell)[\ell]$, we can express our two divisor classes $x_1$ and $x_2$ as points of $\frac1\ell \Lambda / \Lambda \subset \C^g / \Lambda$.

Let $\widetilde{x_1}$ be a lift of $x_1$ to $\C^g$. We next pick $g$ points $(P_j)_{1\leqslant j \leqslant g}$ on $X_1(\ell)$, and, using Newton iteration, we compute another $g$ points $(P'_j)_{1\leqslant j \leqslant g}$ with $P'_j$ close to $P_j$ such that
\[ \sum_{j=1}^g \left( \int_{P_j}^{P'_j} f_i(\tau) d\tau \right)_{1 \leqslant j \leqslant g} = \frac{\widetilde{x_1}}{2^m}, \]
 where $m \in \N$ is large enough for Newton iteration to converge, and the integrals are taken along the short paths joining $P_j$ to $P'_j$. Thus, we get the divisor
\[ D_1^{(m)} = \sum_{j=1}^g ( P'_j-P_j) \]
which satisfies $2^m \big[D_1^{(m)}\big] = x_1$. Then, using K. Khuri-Makdisi's algorithms \cite{Mak1} \cite{Mak2} to compute in the jacobian $J_1(\ell)$, we double $m$ times the divisor class of $D_1^{(m)}$, which yields an $\ell$-torsion divisor $D_1$ representing $x_1$. We apply the same process to get another $\ell$-torsion divisor $D_2$ representing $x_2$.

This way, we find $\ell$-torsion divisors using only integrals along short paths which are well inside the convergence disks. Consequently we have far fewer precision problems than with J. Bosman's method \cite{Bos}.

We thus now have two $\ell$-torsion divisors $D_1$ and $D_2$ whose images by the Abel-Jacobi map form a basis of the $\ell$-torsion subspace $V_{f,\mathfrak{l}}$. We then compute all the reduced divisors
\[ D_{a,b} \sim a D_1 + b D_2, \quad a,b \in \F_\ell, \]
yielding a collection of $\ell^2$ reduced divisors corresponding to the $\ell^2$ points of $V_{f,\mathfrak{l}}$, and evaluate them by a well-chosen Galois-equivariant map $\alpha \colon V_{f,\mathfrak{l}} \longrightarrow \overline \Q$. The polynomial
\[ F(X) = \prod_{\substack{a,b \in \F_\ell \\ (a,b) \neq (0,0)}} \big( X - \alpha(D_{a,b}) \big) \]
then lies in $\Q[X]$; we can recognise its coefficients using continued fractions. This polynomial encodes the Galois representation we are attempting to compute, in that its splitting field $L$ over $\Q$ is the number field cut out by the representation $\rho_{f,\mathfrak{l}}$, and $\Gal(L/\Q)$ acts on its roots $ \varphi(D_{a,b})$ just like $GL_2(\F_{\ell})$ acts on $(a,b) \in \F_\ell^2$.

Our final task is to describe the image of Frobenius elements by this representation. For this, we adapt T. and V. Dokchitser's work \cite{Dok} to get resolvents
\[ \Gamma_C(X) \in \Q[X], \quad C \text{ similarity class of } \GLFl \]
such that
\[ \rho_{f,\mathfrak{l}} \left( \left( \frac{L/\Q}{p} \right) \right) \in C \ \Longleftrightarrow  \ \Gamma_C \big( \Tr_{A_p/\F_p}a^p \, h(a) \big) = 0 \bmod p, \]
where $A_p = \F_p[X] / \big(F(X)\big)$, $a$ denotes the class of $X$ in $A_p$, and $h$ is a polynomial (cf \cite{Dok} or section \ref{Detail_Dok}). We furthermore present a trick to reduce the amount of computations at this step.

Finally, we can now compute the coefficients $a_p$ of the $q$-expansion of $f$ modulo $\mathfrak{l}$:
\[ a_p \bmod \mathfrak{l} = \Tr \rho_{f,\mathfrak{l}} \left(\left( \frac{L/\Q}{p} \right)\right). \]

\newpage

\section{Detailed description of the steps}

We first show in subsection \ref{Detail_qexp} how to quickly compute a huge number of terms of the $q$-expansion at infinity of the cuspforms of weight $2$ and level $\ell$, and next, in \ref{Detail_periods}, how to efficiently compute the period lattice of $X_1(\ell)$ to high precision using these $q$-expansions. Then, we explain in \ref{Detail_Mak} how to use K. Khuri-Makdisi's algorithms \cite{Mak1} \cite{Mak2} on $X_1(\ell)$. Our method requires carefully chosing Eisenstein series, as explained in \ref{Detail_Eisenstein}. After this, we show in \ref{Detail_input} how to compute an $\ell$-torsion divisor. Finally, we explain in \ref{Detail_eval} how to construct a well-behaved function on the jacobian $J_1(\ell)$ and how to evaluate it at the $\ell$-torsion divisors, and we conclude by describing in \ref{Detail_Dok} an efficient way of computing the image of the Frobenius elements by the Galois representation.

\subsection{Expanding the cuspforms of weight $2$ to high precision}\label{Detail_qexp}

\hypertarget{Detail_qexp}We will need to know the $q$-expansion of the newforms of weight $2$ in order to compute the period lattice of the modular curve. Classical methods based on modular symbols (cf for instance chapter 3 of \cite{Stein}) allow us to compute a moderate number of terms of these $q$-expansions, but we will need to know the periods with very high accuracy, which requires computing a quite larger number of coefficients in these $q$-expansions, therefore using classical methods for this, though possible, would be too slow. Consequently, we present a new method to quickly compute a huge number of such coefficients. It proceeds roughly as follows :

\begin{itemize}
\item First, compute a moderate number of coefficients of the $q$-expansion of each cuspform $\omega$.
\item Then, use these coefficients to find a polynomial equation relating (a modular function depending on) $\omega$ to the modular invariant $j$, or some other modular function whose $q$-expansion is very easy to compute.
\item Finally, use Newton iteration on this equation between $q$-series to compute a huge number of coefficients of $\omega$.
\end{itemize}

Besides, all this is done modulo some prime $p$ so as to fasten the computation by avoiding intermediate coefficient swell.

\bigskip

More precisely, to compute these $q$-expansions to the precision $O(q^B)$, we first compute a generator of the Hecke algebra $\T_{2,\ell} \otimes_\Z \Q$, by picking a mild Hecke operator\footnote{In practice, it appears that in an overwhelming majority of cases, at least one of $T_2$ and $T_3$ is a generator.} and testing whether it is a generator, which is easy as it amounts to check if its eigenvalues on $S_2\big( \Gamma_1(\ell)\big)$ are all distinct.

Next, we choose the prime $p$. We can find a basis $\mathcal{B} = \bigsqcup_\varepsilon \mathcal{B}_\varepsilon$ of
\[ S_2\big( \Gamma_1(\ell)\big) = \bigoplus_{\varepsilon \text{ character} \bmod \ell}S_2(\varepsilon) \]
consisting in forms which are not necessarily eigenforms\footnote{If we used a basis of eigenforms, the common number field containing the Fourier coefficients of all these forms could be much larger.}, but which are normalised, have a nebentypus $\varepsilon$, and  $q$-expansion coefficients lying among the integers $\Z_K$ of the common cyclotomic field\footnote{The common field is indeed $\Q\left(\zeta_{(\ell-1)/2} \right)$ and not $\Q\left(\zeta_{\ell-1} \right)$, as $S_2(\varepsilon) = 0$ for odd $\varepsilon$.} $K = \Q\left(\zeta_{(\ell-1)/2} \right)$. To make it easier to reduce mod $p$ and lift back to $K$, we want $p$ to split completely in $K$. Also, $p$ should be chosen large enough for reduction mod $p$ of the coefficients to be faithful. Deligne's bounds state that if $q + \sum_{n \geqslant 2} a_n q^n$ is a newform of weight $2$, then for all $n \in \N$,
\[ \vert a_n \vert_\sigma \leqslant d(n) \sqrt n \]
for every complex embedding $\sigma$, where $d(n)$ denotes the number of positive divisors of $n$. These bounds may not apply to the forms in the bases $\mathcal{B}_\varepsilon$ as they are not eigenforms, but using our knowledge of a generator of the Hecke algebra, we can compute for each $\varepsilon$ a change of basis matrix from the basis $\mathcal{B}_\varepsilon$ to a basis of eigenforms, then deduce from Deligne's bound a bound on the complex embeddings of the $B$ first coefficients of the forms of  $\mathcal{B}_\varepsilon$, and finally, compute a bound on the coefficients of these coefficients seen as polynomials in $\zeta_{(\ell-1)/2}$. We choose $p \neq \ell$ to be the smallest rational prime greater than twice this bound and such that $p \equiv 1 \bmod (\ell-1)/2$.  Then the $(\ell-1)/2$-th cyclotomic polynomial splits completely over $\F_p$. Letting $a_i$ denote lifts to $\Z$ of its roots in $\F_p$, and $\mathfrak{p}_i = \big(p,\zeta_{(\ell-1)/2}-a_i\big)$, we have that $p$ splits completely as $\prod_i \mathfrak{p}_i$ in $K$.

Next, we compute the forms
\[ E_4 = 1 + 240 \sum_{n=1}^{+\infty} \sigma_3(n) q^n, \
E_6 = 1 - 540 \sum_{n=1}^{+\infty} \sigma_5(n) q^n, \
\mbox {and } u = \frac1j = \frac{E_4^3-E_6^2}{1728 E_6^2} \]
in $\F_p[[q]]$, as well as $dj$ in $q^{-2} \F_p[[q]] dq$, to precision $O(q^B)$.

We then can compute the $q$-expansions of the forms $\omega$ with trivial nebentypus $\varepsilon = \charf$ in $\mathcal{B}_\charf$ as follows. Note that such a form $\omega$ has $q$-coefficients in $\Z$. Consider the form $v = \frac{\omega dq}{q dj} \in \Z[[q]]$. It has weight $0$, so it is a rational function on $X_1(\ell)$, which actually descends to a rational function on $X_0(\ell)$ because $\varepsilon = \charf$. Its degree there is at most $2g_0+\ell+1$, where $g_0$ denotes the genus of $X_0(\ell)$. Indeed, its degree is at most the number of zeroes of the $1$-form $\omega \frac{dq}q$ plus the number of poles of the $1$-form $dj$. On the one hand, $\omega \frac{dq}q$ has exactly $2g_0-2$ zeroes as it is regular. On the other hand, as $dj$ has a double pole at the cusp on $X(1)$, it has a pole of order $e_c+1$ at each cusp $c$ of $X_0(\ell)$, where $e_c$ is the ramification index of $c$. Summing over the two cusps of $X_0(\ell)$, we thus see that $dj$ has $\ell+3$ poles on $X_0(\ell)$, hence the announced bound on the degree of $v$. Besides, $u$ has degree exactly $\ell+1$ on $X_0(\ell)$. Consequently, there exists an irreducible polynomial $\Phi(U,V) \in \F_p[U,V]$ of degree at most $2g_0+\ell+1$ in $U$ and exactly $\ell+1$ in $V$ such that $\Phi(u, v ) \equiv 0 \bmod p$. We compute this polynomial by linear algebra over $\F_p$ in $\F_p[[q]]$, using a moderately precise $q$-expansion of $\omega$ computed by classical algorithms. Then, by Newton iteration, we can compute $v \bmod p$, and hence $\omega \bmod p$, to the precision $O(q^B)$, and finally lift the coefficients of $\omega$ back to $\Z$.

Once this is done, we can compute the $q$-expansions of the forms $\omega$ with nontrivial nebentypus $\varepsilon$ as follows. Let $\omega_0 \in \mathcal{B}_\charf$ be one of the $g_0$ forms\footnote{Here, the method breaks down for $\ell = 13$. Indeed, this is the only case in which $g_0 = 0$ (remember we supposed $\ell \geqslant 11$), so that there is no such form in this case. So, in this special case $\ell = 13$, classical methods to expand the forms should be used instead. This is not a big problem, as this is a ``small'' case ($g$ is only $2$), so little accuracy is needed and the whole Galois representation is quite fast anyway.} with trivial nebentypus whose $q$-expansion we have just computed. Then $\frac\omega{\omega_0}$ is a rational function on $X_1(\ell)$ with nebentypus $\varepsilon$. We could thus proceed to find an equation $\Phi$ as previously by reasoning on $X_1(\ell)$ instead of $X_0(\ell)$, but this would lead to very high degrees and hence would be too slow. Instead, notice that if $o$ denotes the order of $\varepsilon$, then $v = \left( \frac\omega{\omega_0} \right)^o$ has trivial nebentypus, so descends to a function on $X_0(\ell)$, of degree at most $\frac{(2g_1-2)o}{(\ell-1)/2}$, where $g_1 = g$ denotes the genus of $X_1(\ell)$, because it has degree at most $(2g_1-2)o$ over $X_1(\ell)$. We can thus compute as previously for each $\mathfrak{p}_i$ an irreducible polynomial $\Phi(U,V) \in \F_p[U,V]$ of degree at most $\frac{(2g-2)o}{(\ell-1)/2}$ in $U$ and exactly $\ell+1$ in $V$ such that $\Phi(u, v) \equiv 0 \bmod \mathfrak{p_i}$. Then again, we use Newton iteration to compute $v  \bmod \mathfrak{p_i}$, then take the $o^\text{th}$ root to recover $\omega \bmod \mathfrak{p_i}$, and finally lift back to $K$ by Chinese remainders. 

Finally, we apply the change of basis matrices from $\mathcal{B}_\varepsilon$ to eigenforms which we computed in the beginning to get the $q$-expansions of the newforms from the $q$-expansions of the forms we have just computed.

\bigskip

This method is faster than the classical one for large $B$.

\begin{thm} For fixed prime level $\ell$, the number of bit operations required to compute the $q$-expansion of the newforms in $S_2\big(\Gamma_1(\ell)\big)$ to precision $O(q^B)$ with the algorithm described above is quasi-linear in $B$.\end{thm}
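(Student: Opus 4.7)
The plan is to decompose the algorithm into its $B$-independent precomputations and its $B$-dependent main work, and to show that the latter reduces to operations on power series of length $O(B)$ in $\F_p[[q]]$ carried out via fast polynomial arithmetic. For fixed $\ell$, locating a generator of $\T_{2,\ell}\otimes_\Z\Q$, computing the change-of-basis matrices between $\mathcal{B}_\varepsilon$ and a basis of eigenforms, obtaining a moderate number of coefficients of each form in $\mathcal{B}$ by modular symbols, and determining the bivariate polynomial $\Phi(U,V)\in\F_p[U,V]$ by linear algebra all have cost bounded by a function of $\ell$ alone, since the bidegrees of $\Phi$ and the dimensions of the matrices are functions of $\ell$; I would first dispatch these steps as $O_\ell(1)$ in $B$.

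Next I would bound $\log p$. Deligne's bound $|a_n|_\sigma\leqslant d(n)\sqrt n$ on eigenform coefficients, combined with the fixed change-of-basis to $\mathcal{B}_\varepsilon$, gives a bound on the coefficients in $\mathcal{B}_\varepsilon$ that grows like $\sqrt B\cdot B^{o(1)}$ for indices below $B$; hence $\log p=O_\ell(\log B)$ and a single arithmetic operation in $\F_p$ costs $(\log B)^{1+o(1)}$ bit operations. The basic power-series computations modulo $q^B$ — the divisor sums $\sigma_3(n)$ and $\sigma_5(n)$ assembled by a linear sieve, the forms $E_4,E_6$, the quotient $u=(E_4^3-E_6^2)/(1728E_6^2)$, and $dj$ — then each cost $\tilde O(B)$ operations in $\F_p$ thanks to fast multiplication and power-series inversion, hence $\tilde O(B)$ bit operations in total.

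The crucial step is the Newton iteration solving $\Phi(u,v)=0$ for $v$. Starting from a moderately precise approximation of $v$ deduced from the classical expansion of $\omega$, the iteration $v_{n+1}=v_n-\Phi(u,v_n)/\partial_V\Phi(u,v_n)$ doubles the valuation of the defect $\Phi(u,v_n)$ at each step, so $O(\log B)$ iterations reach precision $O(q^B)$. If $B_n$ denotes the working precision at step $n$, with $B_n=2B_{n-1}$, then one iteration costs $\tilde O(B_n)$ bit operations, since $\Phi$ has bidegree $O_\ell(1)$ and evaluating it, together with the single power-series division, amounts to a bounded number of multiplications and one inversion at precision $B_n$; the geometric sum telescopes to $\tilde O(B)$. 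One further multiplication recovers $\omega\bmod p$ from $v$ via $\omega\,dq = v\,q\,dj$. The main point to verify, and the chief technical obstacle, is that $\partial_V\Phi(u,v)$ is a unit in $\F_p[[q]]$, so that the Newton step is well-defined and converges quadratically; this follows from the irreducibility of $\Phi$ and the simplicity of the root $v(0)$ at $q=0$, and has to be monitored both when choosing $B_0$ and when choosing $p$.

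The nontrivial-nebentypus case loops over the $(\ell-1)/2=O_\ell(1)$ primes $\mathfrak p_i$ above $p$ and runs the same Newton iteration to produce $v\bmod\mathfrak p_i$ in time $\tilde O(B)$ each, then extracts the $o$-th root, which is itself a Newton iteration of cost $\tilde O(B)$ since the $o$-th root of a unit in $\F_p[[q]]$ is obtained from $v_{n+1}=v_n-(v_n^o-v)/(o\,v_n^{o-1})$ with doubling precision. Coefficient-by-coefficient Chinese remaindering reconstructs $\omega$ in $\Z_K[[q]]/(q^B)$ in $\tilde O(B)$ bit operations, and the final change of basis from $\mathcal{B}$ to the basis of newforms applies an $O_\ell(1)$-sized matrix to length-$B$ coefficient vectors in $\tilde O(B)$ time. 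Since there are $O_\ell(1)$ forms to process and every stage is $\tilde O(B)$, the total running time is quasi-linear in $B$, which is the claimed bound.
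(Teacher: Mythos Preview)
Your proposal is correct and follows essentially the same approach as the paper: bound $p$ via Deligne so that $\F_p$-arithmetic costs $O(\log B)$, compute $E_4,E_6,u,dj$ by sieve plus fast series arithmetic, solve $\Phi(u,v)=0$ by Newton iteration with doubling precision, then lift and change basis. If anything, you are more careful than the paper in two places---you make explicit the telescoping of the per-step Newton costs and the separate Newton iteration for the $o$-th root, and you flag the invertibility of $\partial_V\Phi(u,v)$---while the paper's proof simply asserts ``one Newton iteration takes $O(B\log B)$ operations in $\F_p$'' and does not mention the root extraction; one very minor quibble is that $\Phi$ lives in $\F_p[U,V]$ with $p$ depending on $B$, so its determination is not literally $O_\ell(1)$ but $O_\ell\big((\log B)^{O(1)}\big)$, which of course is still absorbed in $\tilde O(B)$.
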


In comparison, the bit complexity of the classical algorithm based on modular symbols is at least quadratic in $B$, cf \cite{Stein}, remark 8.3.3.

\begin{proof}
First notice that for fixed level $\ell$, the change of basis matrices from the bases $\mathcal{B}_{\varepsilon}$ to eigenforms are fixed, and so is the common field $K = \Q\left(\zeta_{(\ell-1)/2} \right)$. Consequently the coefficients of $\zeta_{(\ell-1)/2}$ in the coefficients up to $q^B$ of the forms in the bases $\mathcal{B}_{\varepsilon}$ are bounded by $C \sup_{n < B} d(n) \sqrt{n}$ where $C$ is some constant which does not depend on $B$. This bound is $O(B)$ (because $d(n) = O(n^{\delta})$ for every $\delta > 0$, cf for instance \cite{HW}, theorem 315), so the smallest prime $p$ larger than twice this bound and congruent\footnote{By Dirichlet's theorem on arithmetic progressions, this congruence condition does not change the order of magnitude of $p$.} to $1 \bmod (\ell-1)/2$ is also $O(B)$, and can be found in using the sieve of Eratosthenes in $O(B \log B \log \log B)$ bit operations (cf the proof of the theorem 18.10 part ii in \cite{Gathen}). Then arithmetic operations in the residue field $\F_p$ will require $O(\log B)$ bit operations. Next, $E_4$ and $E_6$ can be computed mod $p$ to precision $O(q^B)$ in $O(B \log B \log \log B)$ bit operations using again the sieve of Eratosthenes, and $u$ and $dj$ can be computed in $O(B \log B)$ operations in $\F_p$ with fast series arithmetic. As $\ell$ is fixed, computing the short $q$-expansions and finding the equations $\Phi$, which are of fixed degree, takes fixed time. Then, one Newton iteration takes $O(B \log B)$ operations in $\F_p$ with fast arithmetic, and reaching precision $O(q^B)$ requires $O(\log B)$ such iterations. Finally, lifting back each coefficient to $K$ requires $O(\log B)$ bit operations, so lifting the forms requires $O(B \log B)$ bit operations, hence the result.
 \end{proof}

\newpage

\subsection[Computing the periods of X1(l)]{Computing the periods of $X_1(\ell)$}\label{Detail_periods}

Computing the period lattice $\Lambda$ amounts, by the Manin-Drinfeld theorem (cf \cite{Lang}, chapter IV, theorem 2.1), to compute integrals of newforms of weight $2$ along modular symbols, such as
\[ \int_{\infty}^0 f(\tau) d\tau. \]
These integrals can be computed by integrating $q$-expansions term by term. However, we have to split the integration path so that the resulting series converges. Furthermore, to increase the convergence speed, we need the path ends to lie well-inside the convergence disks.

To reduce the number of integrals we compute, we use the adjointness property of the Hecke operators with respect to the integration pairing between modular symbols and cuspforms. In general, the modular symbol $\{ \infty, 0 \}$ alone does not span the rational homology of the modular curve, even over $\T_{2,\ell} \otimes \Q$, so we introduce other modular symbols, the twisted winding elements $w_p$.

More precisely, define (cf \cite{CH6}, section 6.3), for every $p \neq \ell$ prime or $p=1$, the twisted winding element
\[ w_p = \sum_{a \bmod p} \left( \frac{a}p \right) \left\{ \infty, \frac{a}p \right\} \in \mathbb{M}_2\big(\Gamma_1(\ell)\big), \]
where $ \left( \frac{\cdot}p \right)$ denotes the Legendre symbol, which we define to be $1$ if $p=1$ for convenience. We write each basis element $\gamma_j$ of $H_1\big( X_1(\ell)(\C), \Z \big)$ as a $\T_{2,\ell} \otimes \Q$-linear combination
\[ \gamma_j = \sum_p T_{j,p} w_p, \quad T_{j,p} \in \T_{2,\ell} \otimes \Q. \]
We can compute the periods using the adjointness property of the integration pairing with respect to Hecke operators as follows:
\[ \int_{\gamma_j} f(\tau) d\tau = \int_{\sum_p T_{j,p} w_p} f(\tau) d\tau = \sum_p \int_{w_p} (T_{j,p} f)(\tau) d\tau = \sum_p \lambda_{j,p} \int_{w_p} f(\tau) d\tau, \]
where $\lambda_{j,p} \in \C$ denotes the eigenvalue of the newform $f$ for the Hecke operator $T_{j,p}$.

Consequently, all we need is to compute the integrals $\int_{w_p} f(\tau) d\tau$. Given a cuspform
\[ f = \sum_{n \geqslant 1} a_n q^n \in S_2\big( \Gamma_1(\ell) \big), \]
and $\chi$ a Dirichlet character modulo $p \neq \ell$, one defines the twisted cuspform\footnote{Indeed, twisting by a Dirichlet character whose modulus is prime to the level preserves cuspforms, though it raises the level, cf \cite{AtLi}, proposition 3.1.}
\[ f\otimes \chi = \sum_{n \geqslant 1} a_n \chi(n) q^n. \]
It is a cuspform of level $\ell p^2$. The Fricke involution $W_\ell$ transforms the form $f(\tau)$ into $\frac1{\ell \tau^2} f\left( \frac{-1}{\ell \tau} \right)$. It is useful for our purpose in that it can be used to move a $\tau$ with small imaginary part to $\frac{-1}{\ell \tau}$, which can have a much larger imaginary part. We read in \cite{CH6}, section 6.2, that if $f = q + \sum_{n \geqslant 2} a_n q^n \in S_2\big(\Gamma_1(\ell),\varepsilon\big)$ is a newform with weight $2$, level $\ell$ and character $\varepsilon$, then $W_\ell f$ is the newform with weight $2$, level $\ell$ and conjugate character $\overline \varepsilon$ defined by
\[ W_\ell f = \lambda_\ell(f) \left( q + \sum_{n \geqslant 2} \overline{a_n} q^n \right), \]
where $\lambda_\ell(f)$ is given by
\[ \lambda_\ell(f) = \left\{\begin{array}{ll} - \overline{a_\ell} & \mbox{if } \varepsilon \mbox{ is trivial,} \\ \displaystyle \frac{g(\varepsilon) \overline{a_\ell}}{\ell} & \mbox{if } \varepsilon \mbox{ is nontrivial,} \end{array} \right. \]
where $g(\cdot)$ denotes the Gauss sum of a Dirichlet character. Moreover, if $f$ is a newform of weight $2$ with character $\varepsilon$, one has the formula
\[W_{\ell p^2}(f \otimes \chi) = \frac{g(\chi)}{g(\overline \chi)} \varepsilon(p) \chi(-\ell) \cdot (W_\ell f) \otimes \overline \chi. \]
An easy computation shows that
\[ \sum_{a \bmod p} \overline\chi(a) f(\tau+a/p) = g(\overline\chi) (f \otimes \chi) (\tau). \]
This yields the formula 
\[ \int_{w_p} f(\tau) d\tau = g\bigg(\left(\frac{\cdot}{p}\right)\bigg) \int_\infty^0 \bigg(f \otimes \left(\frac{\cdot}{p}\right)\bigg) (\tau) d\tau \]
\[ = g\bigg(\left(\frac{\cdot}{p}\right)\bigg) \left(\int_\infty^{\frac{i}{p \sqrt{\ell}}} \bigg(f \otimes \left(\frac{\cdot}{p}\right)\bigg) (\tau) d\tau + \int_{\frac{i}{p \sqrt{\ell}}}^0 \bigg(f \otimes \left(\frac{\cdot}{p}\right)\bigg) (\tau) d\tau \right) \]
\[ = g\bigg(\left(\frac{\cdot}{p}\right)\bigg) \left(\int_\infty^{\frac{i}{p \sqrt{\ell}}} \bigg(f \otimes \left(\frac{\cdot}{p}\right)\bigg) (\tau) d\tau - \int_\infty^{\frac{i}{p \sqrt{\ell}}} W_{\ell p^2}\bigg(f \otimes \left(\frac{\cdot}{p}\right)\bigg) (\tau) d\tau \right) \]
\[ = \frac{g\Big(\left(\frac{\cdot}{p}\right)\Big)}{2\pi i} \sum_{n=1}^{+\infty} \left(a_n - \varepsilon(p) \left(\frac{-\ell}{p}\right) \lambda_\ell(f) \overline{a_n}\right) \frac{\left(\frac{n}{p}\right)}{n} \left(e^{-\frac{2\pi}{p \sqrt{\ell}}}\right)^n, \]
which allows us to compute the integral of a newform along a twisted winding element, and thus to finally compute the period lattice of the modular curve $X_1(\ell)$. We sum power series at $q = e^{-\frac{2\pi}{p \sqrt{\ell}}}$ for small\footnote{We have checked $p \leqslant 3$ is very often sufficient, and $p \leqslant 7$ is enough for all levels $\ell \leqslant 61$, except for $\ell = 37$ in which case we had to go up to $p = 19$.} primes $p$, which has small enough modulus to achieve fast convergence.

\subsection[Arithmetic in the jacobian J1(l)]{Arithmetic in the jacobian $J_1(\ell)$}\label{Detail_Mak}

In order to efficiently compute in the jacobian $J_1(\ell)$, we use K. Khuri-Makdisi's algorithms \cite{Mak1} \cite{Mak2}. This requires choosing an effective divisor $D_0$ of degree $d_0 \geqslant 2g+1$ such that we know how to compute the associated complete linear series
\[ V = H^0 \big( X_1(\ell), 3 D_0 \big). \]
A divisor class $x \in J_1(\ell)$ is then represented by an effective divisor $D$ of degree $d_0$ such that the class of $D-D_0$ is $x$, and $D$ is itself represented by the subspace
\[ W_D = H^0 \big( X_1(\ell), 3 D_0 - D \big) \subset V; \]
in particular $0 \in J_1(\ell)$ can be represented by
\[ W_0 = H^0\big( X_1(\ell), 2 D_0 \big) \subset V.\]

Let us first give an overview of how to achieve this. Our strategy consists in choosing $D_0 = K + c_1 + c_2 + c_3$, where $K$ is an effective canonical divisor, and the $c_i$ are cusps, so that for us $d_0 = 2g+1$ exactly. First, we compute the $(g+2)$-dimensional space
\[ V_2 = H^0\big( X_1(\ell), \Omega^1(c_1+c_2+c_3) \big). \]
This space is the direct sum of all the cusp forms of weight $2$ and of the scalar multiples of Eisenstein series $e_{1,2}$ and $e_{1,3}$ of weight $2$ vanishing at all cusps except $c_1$ and $c_2$ for $e_{1,2}$ and except $c_1$ and $c_3$ for $e_{1,3}$,
\[ V_2 = S_2\big(\Gamma_1(\ell),\C\big) \oplus \C e_{1,2} \oplus \C e_{1,3} \subset M_2\big( \Gamma_1(\ell), \C \big). \]
The point of this is that by picking a rational cusp form $f_0 \in S_2\big(\Gamma_0(\ell),\Q\big)$, we obtain an isomorphism
\[ \begin{array}{ccc} V_2 & \overset{\sim}{\longrightarrow} & H^0 \big( X_1(\ell), K+c_1+c_2+c_3 \big) \\ f & \longmapsto & \displaystyle \frac{f}{f_0} \end{array}, \]
where $K$ is the divisor of the differential $1$-form over $X_1(\ell)$ associated to the cuspform $f_0$, which is indeed an effective canonical divisor. Now by \cite{Mak1}, lemma 2.2, the map
\[ \begin{array}{ccc} V_2^{\otimes 3} & \longrightarrow & H^0 \big( X_1(\ell), 3 (K+c_1+c_2+c_3) \big) \\ f_1 \otimes f_2 \otimes f_3 & \longmapsto & \displaystyle \frac{f_1 f_2 f_3}{f_0^3} \end{array} \]
is surjective. We may thus choose $V$ to be the image of the multiplication map
\[ \begin{array}{ccc} V_2^{\otimes 3} &\longrightarrow & M_6\big( \Gamma_1(\ell), \C \big) \\ f_1 \otimes f_2 \otimes f_3 & \longmapsto & \displaystyle f_1 f_2 f_3 \end{array}. \]
In this framework, the subspace $W_0$ representing $0 \in J_1(\ell)$ is the image of the map
\[ \begin{array}{ccc} V_2^{\otimes 2} &\longrightarrow & M_6\big( \Gamma_1(\ell), \C \big) \\ f_1 \otimes f_2 & \longmapsto & \displaystyle f_1 f_2 f_0 \end{array}. \]
From now on, we will implicitly identify weight-$6$ modular form spaces with the corresponding modular function spaces obtained by dividing by $f_0^3$.

We represent the weight-$6$ forms by their $q$-expansions at each cusp\footnote{We could also have represented forms by their $q$-expansions at $\infty$ only, but we think using $q$-expansions at various cusps is better for numerical stability. Also we will later need to be able to evaluate the forms at various points of the modular curve, hence it is better to know the $q$-expansions at various places.}. The modular curve $X_0(\ell)$ has exactly two cusps, namely $\Gamma_0(\ell) \infty$ and $\Gamma_0(\ell) 0$, whereas the modular curve we're interested in, $X_1(\ell)$, has exactly $\ell-1$ cusps, half of which lie above $\Gamma_0(\ell) \infty$ while the other half lie above $\Gamma_0(\ell) 0$. We call the former cusps above $\infty$ and the latter cusps above $0$. The cusps above $0$ are all rational, whereas the cusps above $\infty$ make up a single Galois orbit. Now, the diamond operators $\langle d \rangle$, $d \in (\Z/\ell\Z)^*$, which correspond to the action of the quotient goup $\Gamma_0(\ell) / \Gamma_1(\ell) \simeq (\Z/\ell\Z)^*$, orbit the cusp $\Gamma_1(\ell) \infty$ onto the cusps above $\infty$, and the cusp $\Gamma_1(\ell) 0$ onto the cusps above $0$. Moreover, the Fricke operator $W_\ell$ swaps $\Gamma_1(\ell) \infty$ and $\Gamma_1(\ell) 0$. We know how the Fricke operator acts on newforms of weight $2$ (cf subsection \ref{Detail_periods} on the periods), and on Eisenstein series (cf next subsection \ref{Detail_Eisenstein}). Besides, all the forms we are dealing with have characters, so that the action of the diamond operators $\langle d \rangle$ on their $q$-expansions boils down to multiplying by the value of their character at $d$. Using these two kinds of operators, we thus get the $q$-expansions of the newforms and of the Eisenstein series at all cusps from their $q$-expansions at $\infty$.

\subsection{Finding the appropriate Eisenstein series}\label{Detail_Eisenstein}

We now explain how to choose the Eisenstein series $e_{1,2}$ and $e_{1,3}$. Let us first review some facts about Eisenstein series of weight $2$ in general (not necessarily prime) level $N$. From \cite{DS}, chapter 4, we know that the Eisenstein subspace of $M_2\big( \Gamma_1(N), \C \big)$ has a basis formed of the Eisenstein series
\[ G_2^{\psi,\varphi}( \tau) = \sum_{r=0}^{u-1}\sum_{s=0}^{v-1}\sum_{t=0}^{u-1} \psi(r) \overline \varphi(s) \hspace{-0.7cm} \sum_{\substack{ (c,d) \in \Z^2 \\c \equiv rv \bmod N \\ d \equiv s+tv \bmod N}} \hspace{-0.5cm} \frac1{(c \tau + d)^2}, \]
where $\psi$ and $\varphi$ are Dirichlet characters not both trivial, of the same parity, and of respective conductors $u$ and $v$ such that $uv = N$ exactly, and
\[ G_2(N\tau) - N G_2(N \tau), \quad G_2(\tau) = \sum_{c \in \Z} \sum_{\substack{d \in \Z \\ (c,d) \neq (0,0)}} \frac1{(c\tau+d)^2}. \]
We furthermore have the $q$-expansions at $\infty$
\[ E_2^{\psi,\varphi}(\tau) = - \charf_{u=1} \frac{1}{2} \sum_{a=0}^{v-1} \varphi(a) a \left( \frac{a}{v} + 1 \right) + 2 \sum_{n=1}^{+\infty} \left(\sum_{\substack{m > 0 \\ m \vert n}} \psi(n/m) \varphi(m) m \right) q^n, \]
where $E_2^{\psi,\varphi}$ is the normalisation of $G_2^{\psi,\varphi}$ defined by the relation 
\[ G_2^{\psi,\varphi} = \frac{-4 \pi^2 g(\overline \varphi)}{v^2} E_2^{\psi,\varphi}, \]
and where $g(\cdot)$ denotes the Gauss sum of a Dirichlet character, and
\[ E_2(\tau) = 1 - 24 \sum_{n=1}^{+\infty}  \left(\sum_{\substack{m > 0 \\ m \vert n}} m\right) q^n, \quad G_2 = \frac{\pi^2}3 E_2. \]
Also, $G_2^{\psi,\varphi} \in M_2\big( \Gamma_1(N), \psi \varphi \big)$ has nebentypus $\psi \varphi$, where $\psi \varphi$ is seen as a Dirichlet character modulo $N$, whereas $G_2(\tau)-N G_2(N \tau)$ has trivial nebentypus. In what follows, we will not use $G_2(\tau)-N G_2(N \tau)$ at all.

Consequently, in the case when $N = \ell$ is prime, we are left with only two cases, namely $G_2^{\chi,1}$ and $G_2^{1,\chi}$, where $\chi$ is a nontrivial even Dirichlet character modulo $\ell$. Both have nebentypus $\chi$, and $G_2^{\chi,1}$ vanishes at $\infty$ while $G_2^{1,\chi}$ does not.

We easily check the formula
\[ G_2^{\psi,\varphi}(\tau) = \sum_{(c,d) \in \Z^2} \frac{\psi(c) \overline \varphi(d)}{(vc \tau + d)^2}, \]
from which it is clear that
\[ W_N G_2^{\psi,\varphi} = \frac{u}{v} \psi(-1) G_2^{\overline \varphi, \overline \psi}, \]
and thus
\[ W_N E_2^{\psi,\phi} = \frac{g(\psi)}{g(\overline \varphi)} \frac{v}{u} \psi(-1) E_2^{\overline \phi, \overline \psi}. \]

We construct Eisenstein series $e_{1,2}$ and $e_{1,3}$ as linear combinations of the $E_2^{\chi,1}$'s and the $E_2^{1,\chi}$'s, because they have nicer $q$-expansions than their $G$-counterparts. First, we choose the cusps $c_1$, $c_2$ and $c_3$ to be\footnote{These are all distinct as $\ell \geqslant 11$.} $c_1 = \Gamma_1(\ell) 0$, $c_2 = \langle 2 \rangle c_1$, and $c_3 = \langle 3 \rangle c_1$, so that they are all defined over $\Q$\footnote{This way, as the canonical divisor $K$ is Galois-invariant since it is the divisor of $f_0 \in S_2\big(\Gamma_0(\ell),\Q\big)$, whose $q$-expansion at $0$ is thus easily proved to be rational, our divisor $D_0$ used to run K. Khuri-Makdisi algorithms will be Galois-invariant, yielding a good behaviour with respect to the Galois action.}. Next, we have from the above formulae
\[ W_\ell E_2^{\chi,1} = \frac{g(\chi)}{\ell} E_2^{1,\overline \chi} \qquad \mbox{and} \qquad W_\ell E_2^{1,\chi} = \frac{\ell}{g(\overline \chi)} E_2^{\overline \chi,1}, \]
from which we read that $E_2^{\chi,1}$ vanishes at the cusps above $\infty$ but not at the cusps above $0$, while the opposite stands true for $E_2^{1,\chi}$. Consequently we construct $e_{1,2}$ and $e_{1,3}$ as linear combinations of the $E_2^{\chi,1}$ only. Now, it follows easily from the orthogonality relations between Dirichlet characters that the Eisenstein series
\[ e_{1,2} = \sum_{\substack{\chi \text{ even} \\ \chi \neq 1}} \frac{1-\chi(2)}{\displaystyle g(\chi) \sum_{a=0}^{\ell-1} \overline \chi(a) a \left( \frac{a}{\ell} + 1 \right)} E_2^{\chi,1} \]
and
\[ e_{1,3} = \sum_{\substack{\chi \text{ even} \\ \chi \neq 1}} \frac{1-\chi(3)}{\displaystyle g(\chi) \sum_{a=0}^{\ell-1} \overline \chi(a) a \left( \frac{a}{\ell} + 1 \right)} E_2^{\chi,1} \]
meet the requirements, that is to say $e_{1,2}$ vanishes at all cusps but $c_1$ and $c_2$, and $e_{1,3}$ vanishes at all cusps but $c_1$ and $c_3$.

\subsection[{Computing an l-torsion divisor}]{Computing an $\ell$-torsion divisor}\label{Detail_input}

Recall our goal is to find null-degree divisors $D_1$ and $D_2$ representing a basis of the eigenplane $V_{f,\mathfrak{l}} \subset J_1(\ell)[\ell]$. From our knowledge of the period lattice $\Lambda$ and of a generator of the Hecke algebra $\T_{2,\ell}$, we can express the basis vectors $x_k$, $k \in \{1,2\}$ of $V_{f,\ell}$ as points in the analytic model $\C^g / \Lambda$ of the jacobian $J_1(\ell)(\C)$. Lift $x_k$ to $\widetilde{x_k} \in \C^g$. We will use Newton iteration to compute $2g$ points $P_j$ and $P'_j$, $1 \leqslant j \leqslant g$, with each $P'_j$ close to $P_j$,
such that
 \[ \sum_{j=1}^g \left( \int_{P_j}^{P'_j} f_i(\tau) d\tau \right)_{1 \leqslant j \leqslant g} = \frac{\widetilde{x_k}}{2^m}, \quad \hypertarget{jeqn}{(\star)} \]
the $2^m$ factor easing convergence. Here the integrals are taken along the short path\footnote{By this we mean that $P'_j$ lies in the same coordinate disk as $P_j$, namely the $q$-disk centered at the cusp $c_j$ (see below), and we integrate along a path inside this disk.} joining $P_j$ to $P'_j$.

\bigskip

First, pick $g$ (not necessarily distinct) cusps $c_1$, $\cdots$, $c_g$. For each of these cusps, we have an analytic map, the ``$q$-coordinate'' around $c_j$
\[ \kappa_j \colon \E \longrightarrow X_1(\ell)(\C), \]
where $\E$ stands for the open unit disk in $\C$, which maps $0$ to the cusp $c_j$ and which is a local diffeomorphism. Next, choose $g$ complex numbers $q_1$, $\cdots$, $q_g$ of small moduli, so that each point $P_j = \kappa_j(q_j)$ is close to the cusp $c_j$. Consider another vector of $g$ small complex numbers $\delta q_1$, $\cdots$, $\delta q_g$. We want to adjust this vector so that \hyperlink{jeqn}{$(\star)$} be satisfied with $P'_j = \kappa_j(q_j+\delta q_j)$. To sum up, the overall map we apply Newton iteration to is
\[ \begin{array}{ccccccc} U  \subset \E^g & \overset{\prod \kappa_j} {\longrightarrow} & X_1(\ell)^g & \longrightarrow & \Div^0 X_1(\ell) & \longrightarrow & \C^g \\ (\delta q_j)_{1 \leqslant j \leqslant g} & \longmapsto & (P'_j)_{1 \leqslant j \leqslant g} & \longmapsto & \displaystyle \sum_{j=1}^g (P'_j - P_j) & \longmapsto & \displaystyle \sum_{j=1}^g \left( \int_{P_j}^{P'_j} f_i(\tau) d\tau \right)_{1 \leqslant j \leqslant g} \end{array} \hspace{-1cm}, \]
where $U$ is a suitable neighbourhood of $0 \in \E^g$. Its differential is given by the newforms $f_i$ themselves evaluated at the $P'_j$, so this presents no difficulty.

\bigskip

Once this is done, we must double the divisor class of
\[ D_k^{(m)} = \sum_{j=1}^g (P'_j - P_j) \]
$m$ times, using K. Khuri-Makdisi's algorithms of course. However, these algorithms can only deal with divisors of the form $D - D_0$, where $D$ is an effective divisor of degree $d_0$, and $D_0$ and $d_0$ are defined in the beginning of the section \ref{Detail_Mak}. To work around this, we fix a ``padding divisor'', that is to say an effective divisor\footnote{Because we will have to evaluate $q$-series at $C$, it proves convenient to choose a divisor $C$ supported by cusps, hence the notation $C$.} $C$ of degree $d_0-g = g+1$, we input the divisors $\sum_{j=1}^g P'_j + C - D_0$ and $\sum_{j=1}^g P_j + C - D_0$ which are indeed of the form $D-D_0$, and then use K. Khuri-Makdisi's algorithm to subtract these two divisor classes. Inputting a divisor $D-D_0$ is easy : it amounts to computing the subspace $W_D = H^0 \big( X_1(\ell), 3 D_0-D \big)$ of $V = H^0 \big( X_1(\ell), 3 D_0 \big)$ consisting of functions of $V$ which vanish at $D$, which we do by evaluating the $q$-series in the basis of $V$ at the points of $D$ and doing linear algebra.

Finally, once the divisor $D_k^{(m)}$ is processed, we apply K. Khuri-Makdisi's chord algorithm $x \mapsto -2x$ on it, yielding $(-2)^m [D_k^{(m)}] = \pm x_k$. The $\pm$ sign is not a problem, because we get a basis vector for $V_{f,\mathfrak{l}}$ no matter what the sign is, and this is all we actually needed.

\subsection{Evaluating the torsion divisors}\label{Detail_eval}

We need a Galois-equivariant function $\alpha \in \Q\big(J_1(\ell)\big)$ which can be efficiently evaluated at every point $x \in V_{f,\mathfrak{l}}$ given in Khuri-Makdisi form. We then evaluate $\alpha$ at each nonzero point of $V_{f,\mathfrak{l}}$, and form the polynomial
\[ F(X) = \prod_{\substack{x \in V_{f,\mathfrak{l}} \\ x \neq 0}} \big(X - \alpha(x)\big) \in \Q[X] \]
which defines the Galois representation $\rho_{f,\mathfrak{l}}$. In order to recognise its coefficients as rational numbers, we compute the continued fraction expansion of each of them until we find a huge term. Clearly, the lower the height of $F(X)$ the better, as it requires less precision in $\C$. This means one should use an evaluation function $\alpha$ whose divisor of poles (or zeroes) belongs to an as-``small''-as-possible class in the N\'eron-Severi group of $J_1(\ell)$.

\bigskip

The classical approach, used in \cite{CH3}, \cite{CH14}, \cite{Bos} and \cite{Zeng}, consists in selecting a rational function $\xi$ on $X_1(\ell)$ defined over $\Q$ and extending it to $J_1(\ell)$ by
\[ \begin{array}{rccc} \Xi \colon & J_1(\ell) & \dashrightarrow & \C \\ & \displaystyle \sum_{i=1}^g P_i - g O & \longmapsto & \displaystyle \sum_{i=1}^g \xi(P_i) \end{array}, \]
where $O \in X_1(\ell)(\Q)$ is an origin for the Abel-Jacobi map. The divisor of the poles of $\Xi$ is
\[ (\Xi)_\infty = \sum_{Q \text{ pole of } \xi} \tau_{[Q-O]}^* \Theta, \]
where $\Theta$ is the theta divisor on $J_1(\ell)$ associated to the Abel-Jacobi map with origin $O$. We thus see that $(\Xi)_\infty$ is the sum of $\deg \xi$ translates of $\Theta$, so that $\xi$ should be chosen to have degree as low as possible. However, this degree is at least the gonality of $X_1(\ell)$, which is roughly proportional to $g$ (cf \cite{Abram}, remark 0.2).

\bigskip

We introduce a radically different method, which can be used on the jacobian of every algebraic curve $X$, the genus of which we will denote by $g$. Every point $x \in \Jac(X)$ can be written $[E_x-gO]$, where $E_x$ is an effective divisor of degree $g$ on $X$ which is generically unique, and $O \in X$ is a fixed point. Let $\Pi$ be a fixed divisor on $X$ of degree $2g$. Then the space $H^0(X,\Pi-E_x)$ is generically $1$-dimensional over $\C$, say spanned by $t_x \in \C(X)$. The divisor of $t_x$ is of the form $(t_x) = -\Pi+E_x+R_x$, where $R_x$ is a residual effective divisor of degree $g$ on $X$, which is the image of $E_x$ by the reflection
\[ \begin{array}{rccc} R_\Pi \colon & \Pic^g(X) & \longrightarrow &\Pic^g(X) \\ & \displaystyle [E] & \longmapsto &[\Pi-E] \end{array}. \]
Letting $A$ and $B$ be two points on $X$ disjoint from the support of $\Pi$, we can then define
\[ \begin{array}{rccc} \alpha \colon & \Jac(X) & \dashrightarrow & \C \\ & x & \longmapsto & \frac{t_x(A)}{t_x(B)} \end{array}. \]
This map is well-defined only on a dense Zariski subset of $\Jac(X)$ because of the genericity assumptions, and it is defined over $\Q$ if $X$, $\Pi$, $A$, $B$ and $O$ are defined over $\Q$. Moreover, it is much better-behaved than the function $\Xi$ used in the classical approach :
\begin{thm}
The divisor of poles of $\alpha$ is the sum of only two translates of the $\Theta$ divisor.
\end{thm}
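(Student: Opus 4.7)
The approach is to locate the poles of $\alpha$ on $\Jac(X)$ and identify the two components of the polar divisor. By construction $\alpha(x) = t_x(A)/t_x(B)$, so $x$ is a pole precisely when $t_x(B) = 0$ while $t_x(A) \neq 0$. Since $\ddiv(t_x) = -\Pi + E_x + R_x$ and $B$ lies outside the support of $\Pi$, the condition $t_x(B) = 0$ is equivalent to $B \in \Supp(E_x + R_x)$. My plan is to show separately that each of the two conditions $B \in \Supp E_x$ and $B \in \Supp R_x$ carves out a translate of $\Theta$ in $\Jac(X)$, and that these two translates appear with multiplicity one in $(\alpha)_\infty$.

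For the first component, if $B \in \Supp E_x$, I would write $E_x = B + E'$ with $E'$ effective of degree $g-1$. Then $x = [B - O] + [E' - (g-1)O]$ lies in the translate $[B-O] + W_{g-1}$, where $W_{g-1}$ denotes the image of $\Sym^{g-1} X \to \Jac(X)$ sending $E' \mapsto [E' - (g-1)O]$. By the standard Abel--Jacobi description of the theta divisor, $W_{g-1}$ is itself a translate of $\Theta$, so this locus is a single translate of $\Theta$.

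For the second component, if $B \in \Supp R_x$, write $R_x = B + R'$ with $R'$ effective of degree $g-1$. From the relation $R_x \sim \Pi - E_x$ we get $E_x \sim \Pi - B - R'$, hence
\[ x = [E_x - gO] = [\Pi - B - (2g-1)O] - [R' - (g-1)O], \]
so $x$ lies in a translate of $-W_{g-1}$. The main obstacle I anticipate is justifying that $-W_{g-1}$ is linearly equivalent to a translate of $W_{g-1}$; this relies on the classical symmetry of the theta divisor on a Jacobian, namely that $[-1]^*\Theta$ is a translate of $\Theta$ (since $\Theta$ is symmetric up to a theta characteristic). Once this is in hand, the second locus is again a translate of $\Theta$. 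Finally, for generic choices of $A$, $B$, $\Pi$ and $O$ these two translates are distinct and $\alpha$ has only simple poles along each, as one checks by picking a general point in each component where the factorisation $E_x = B + E'$ or $R_x = B + R'$ is unique. This yields the claimed equality $(\alpha)_\infty = \Theta_1 + \Theta_2$ with $\Theta_1$ and $\Theta_2$ translates of $\Theta$.
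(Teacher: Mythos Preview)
Your proposal is correct and follows essentially the same route as the paper: both identify the pole locus via the conditions $B \in \Supp E_x$ and $B \in \Supp R_x$, recognise the first as a translate of $\Theta$ through the Abel--Jacobi image $W_{g-1}$, and handle the second via the reflection $R_\Pi$ together with the symmetry $[-1]^*\Theta = \tau_{\mathcal K}^*\Theta$ (translation by the canonical class). Your write-up is more explicit and adds a multiplicity remark that the paper's terse proof leaves implicit.
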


\begin{proof}
$\alpha$ has a pole at $x \in \Jac(X)$ if and only if $[E_x - gO]$ or $[R_x-gO]$ are on the support of $\tau_{[B-O]}^* \Theta$. But $[R_x-gO]$ is the image of $[E_x-gO]$ by the involution $R_\Pi =  \tau_{[\Pi-2gO]} \circ [-1]$ defined above, and $[-1]^* \Theta = \tau_\mathcal{K}^* \Theta$ is the translate of $\Theta$ by the image $\mathcal{K}$ of the canonical class, cf \cite{HS}, theorem A.8.2.1.i.
\end{proof}
This is even in some sense optimal, at least for a general curve, as by the Riemann-Roch theorem for abelian varieties (cf \cite{HS}, theorem A.5.3.3), no nonconstant function on $\Jac(X)$ has a single translate\footnote{Recall that $\operatorname{NS}\big(\Jac(X)\big) = \Z \Theta$ for a generic curve $X$.} of $\Theta$ as divisor of poles.

\bigskip

In order to use this on the modular curve $X_1(\ell)$, there is a difficulty we have to overcome. In K. Khuri-Makdisi's algorithms, a divisor class $x \in J_1(\ell)$ is represented by a subspace $W_D = H^0\big( X_1(\ell), 3 D_0 -D \big) \subset V$, where $D$ is an effective divisor of degree $d_0 = 2g+1$ such that $[ D - D_0 ] = x$, but such a $D$ is far from unique\footnote{To be precise, by the Riemann-Roch theorem, there is a whole $(g+1)$-dimensional projective space of such $D$'s.}! Thus, the first thing to do is to rigidify the representation $W_D$ of $x$ into a representation which depends on $x$ only. To do this, we compute the sub-subspace
\[ W_{D,\text{red}} = H^0\big( X_1(\ell), 3 D_0 -D-C_1 \big) \subset W_D, \]
where $C_1$ is a fixed\footnote{Again, it proves convenient to choose a divisor $C_1$ supported by cusps, so that the $q$-series are effortless to evaluate, hence the notation $C_1$.} effective divisor of degree $d_1 = 2 d_0 - g$, so that $W_{D,\text{red}}$ will generically be $1$-dimensional by the Riemann-Roch theorem. Letting $s_D \in V$ be such that $s_D$ spans $W_{D,\text{red}}$ over $\C$, we know that the divisor of $s_D$ is of the form
\[ (s_D) = -3 D_0 + D + C_1 + E_D, \]
where $E_D$ is some effective divisor of degree $g$. As such, it generically sits alone in its linear equivalence class, again by the Riemann-Roch theorem. But on the other hand, if $W_D$ and $W_{D'}$ both represent the same point $x \in J_1(\ell)(\C)$, then $D \sim D'$, so that $E_D \sim E_{D'}$ as $D_0$ and $C_1$ are fixed. Consequently, we (generically) have $E_D = E_{D'}$, so that $W_D \mapsto E_D$ is the invariant we are looking for. We then use a trick \textit{\`a la} Khuri-Makdisi: we first compute
\[ s_D \cdot V = H^0\big( X_1(\ell), 6 D_0 -D-C_1-E_D \big), \] 
after which we compute
\[ H^0\big( X_1(\ell), 3 D_0 -C_1-E_D \big) = \{ v \in V \ \vert \ v W_D \subset s_D \cdot V \}, \]
all of this by linear algebra as in $\cite{Mak1}$ and $\cite{Mak2}$. Next, we fix another effective divisor\footnote{Again, same remark as above.} $C_2$ of degree $d_2 = d_0 + 1 - g$, so that the subspace $H^0\big( X_1(\ell), 3 D_0 -C_1-C_2-E_D \big)$ of the previously computed space $H^0\big( X_1(\ell), 3 D_0 -C_1-E_D \big)$ is generically one-dimensional. Letting $\Pi = 3 D_0 -C_1-C_2$, we thus have computed a function $t_D \in \C\big(X_1(\ell)\big)$ such that
\[ \C t_D = H^0\big( X_1(\ell), \Pi-E_D \big), \]
as wanted. This allows us to compute the map $\alpha$, which will be defined over $\Q$ if $C_1$, $C_2$, $A$ and $B$ are.

\bigskip

Evaluating $\alpha$ on $V_{f,\mathfrak{l}}$, we may thus hope to get a defining polynomial $F(X)$ of logarithmic height $g/2$ times less than if we had used the classical approach.

\subsection{Finding the Frobenius elements}\label{Detail_Dok}

After evaluating the torsion divisors by a suitable function, we get a polynomial $F(X) \in \Q[X]$ of degree $\ell^2-1$ whose decomposition field is the fixed field $L$ by the kernel of the Galois representation. It is thus a Galois number field, and its Galois group over $\Q$ is embedded by the representation as a subgroup of $\GLFl$. In order to completely specify the Galois representation, we would like to know the image of the Frobenius elements $\left( \frac{L/\Q}{p} \right)$ in $\GLFl$. We now explain how to compute the similarity class of the image of $\left( \frac{L/\Q}{p} \right)$ for almost all\footnote{Clearly, we have to exclude $p = \ell$, as $L$ is ramified at $\ell$. We will shortly see that we actually have to exclude finitely many other primes as well.} rational primes $p$. This can be used to get congruence relations modulo $\ell$ on the coefficients $a_p$ of the cuspform $f$, by looking at the trace of the similarity class of $\left( \frac{L/\Q}{p} \right)$.

\subsubsection{The Dockchitsers' resolvents}

For this, we specialise Tim and Vladimir Dokchitser's work \cite{Dok}. This yields the following result: denoting by $(a_i)_{1 \leqslant i < \ell^2}$ the roots of $F$ in $L$, if $h(X) \in \Z[X]$ is a polynomial with integer coefficients, then for each similarity class $C \subset \GLFl$, the resolvent polynomial
\[ \Gamma_C(X) = \prod_{\sigma \in C} \left( X - \sum_{i=1}^n h(a_i) \sigma(a_i) \right) \]
lies in $\Q[X]$. Moreover, if $p$ is a rational prime which divides none of the denominators of the coefficients of $F$, so that the polynomials $\Gamma_C$ are $p$-integral, which does not divide the discriminant of $F$, and such that the $\Gamma_C$'s are pairwise coprime modulo $p$, then the image by the Galois representation of the Frobenius element $\left( \frac{L/\Q}{p} \right)$ lies in the similarity class $C$ if and only if
\[ \Gamma_C \left( \Tr_{\frac{\F_p[X]}{F(X)}/ \F_p} h(a) a^p \right) = 0, \]
where $a$ denotes the class of $X$ in the quotient algebra $\F_p[X]/\big(F(X)\big)$. Furthermore, the polynomials $\Gamma_C$ are pairwise coprime over $\Q$ for a generic choice of $h(X)$ amongst the polynomials of degree at most $\ell^2-2$ with coefficients in $\Z$.

We first start by computing the roots $a_i$ to a very high precision in $\C$ using Newton iteration (note we already know them to a mildly high precision). Then, we compute complex approximations of the resolvents $\Gamma_C$ by enumerating matrices in the similarity classes of $\GLFl$. Finally, we recognise the coefficients of the resolvents as rationals, using our knowledge of an \textit{a priori} multiple of their denominators, namely a common denominator for the coefficients of $F$ to the cardinality of $C$ times one plus the degree of $h$. In practice, the choice $h(X) = X^2$ has always worked, in that the resulting resolvents $\Gamma_C$ we have computed have always been pairwise coprime over $\Q$, and actually, in all the computations we have run, they have even always remained coprime modulo $p$ as long as $p$ was reasonably large\footnote{As the primary goal of our computations is to find the coefficients $a_p$ of the $q$-expansion of $f$ modulo $\ell$, the only case we are really interested in is the case in which $p$ is extremely large, as naive methods compute $a_p$ for small $p$ in almost no time anyway.}, say at least $10$ decimal digits.

Once the resolvents are computed, it is easy to compute what Frobenius elements $\left( \frac{L/\Q}{p} \right)$ are similar to, and hence to deduce the coefficients of the cuspform $f$ modulo $\mathfrak{l}$.

\subsubsection{The quotient representation trick}

\hypertarget{Quotrep_trick} Unfortunately, these computations, although simple, can be rather slow because of the need for very high precision\footnote{For instance, in level $\ell = 29$, about 5 million decimal digits after the decimal point are required to compute the resolvents.} in $\C$. However, a simple trick allows us to sharply reduce the amount of computations needed. Indeed, we have not yet used the fact that we know in advance what the determinant of the image of the Frobenius element $\left( \frac{L/\Q}{p} \right)$ is, namely $\varepsilon(p) p^{k-1}$, where $k$ and $\varepsilon$ denote respectively the weight and the nebentypus of the newform $f$.

The idea is then to compute a \emph{quotient} representation, that is to say the representation $\rho_{f,\mathfrak{l}}$ composed with the projection map from $\GLFl$ onto one its quotient groups. The coarser the chosen quotient group, the smaller the computation, so we should use a quotient just fine enough to be able to lift correctly an element back to $\GLFl$ based on the knowledge of its determinant. Thus $\PGLFl$ for instance is slightly too coarse, because the knowledge of the image of a matrix in $\PGLFl$ and of its determinant only determines this matrix up to sign\footnote{This is the reason why J. Bosman, for computing only the projective Galois representation, determined the coefficients $a_p$ of $f$ only up to sign.}. This example clearly hints at the quotient group
\[ \widetilde{\GLFl} = \GLFl / S, \]
where $S$ is the largest subgroup\footnote{This subgroup $S$ is the subgroup made up of the elements of odd order in $\F_\ell^*$, that is to say, the $2'$-subgroup of $\F_\ell^*$.} of $\F_\ell^*$ not containing $-1$, which we choose.

Computing the associated quotient Galois representation
\[\xymatrix @R=4pc @C=2pc { G_\Q \ar[r]^-{\rho_{f,\mathfrak{l}}} & \GLFl  \ar@{->>}[r] & \widetilde{\GLFl} } \]
then amounts to describing the Galois action on
\[ \widetilde{V_{f,\mathfrak{l}}} = V_{f,\mathfrak{l}} / S. \]
We thus first begin by computing the polynomial $\widetilde F (X) \in \Q[X]$ defining $\widetilde{V_{f,\mathfrak{l}}}$ by tracing the roots $\alpha(x)$, $x \in V_{f,\mathfrak{l}}$ of $F(X)$ along their orbits under $S$ :
\[ \widetilde F (X) = \prod_{\substack{Sx \in \widetilde{V_{f,\mathfrak{l}}} \\ x \neq 0}} \left(X -  \sum_{s  \in S} \alpha(sx)\right). \]
This new polynomial has the same height as the original $F(X)$, but its degree is $\vert S \vert$ times smaller.

We must then compute the resolvents $\Gamma_{\widetilde C}(X)$ for each conjugacy class $\widetilde C$ of $ \widetilde{\GLFl}$. As the subgroup $S$ of $\GLFl$ is central, these conjugacy classes are easy to describe.

\begin{lem}
Let $\xymatrix @R=4pc @C=2pc {\pi \colon \GLFl \ar@{->>}[r] & \widetilde{\GLFl}}$ denote the projection map, let $\widetilde g \in  \widetilde{\GLFl}$, and let $g \in \GLFl$ such that $\pi(g) = \widetilde g$. Then $\pi$ induces a bijection
\[ \begin{array}{cccc} \pi_g \colon & \text{Conjugacy class of } g & \overset{\sim}{\longrightarrow} & \text{Conjugacy class of } \widetilde{g} \\ & h g h^{-1} & \longmapsto &\pi(h g h^{-1}). \end{array} \]
\end{lem}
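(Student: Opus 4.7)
The plan is to verify in turn that $\pi_g$ is well-defined, surjective, and injective; the only step that uses the specific choice of the subgroup $S$ is injectivity, which I expect to be the main obstacle.

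First, I would check well-definedness and surjectivity, which follow from abstract nonsense about $\pi$ being a surjective group homomorphism and $S$ being central (as scalar matrices commute with everything). If $h_1 g h_1^{-1}$ is any element of the conjugacy class of $g$, then $\pi(h_1 g h_1^{-1}) = \pi(h_1)\, \widetilde g \, \pi(h_1)^{-1}$ lies in the conjugacy class of $\widetilde g$, so $\pi_g$ is well-defined and clearly independent of the choice of $g$ up to conjugation. Surjectivity is similar: given $\widetilde h \, \widetilde g \, \widetilde h^{-1}$ in the conjugacy class of $\widetilde g$, pick any preimage $h \in \GLFl$ of $\widetilde h$ and observe that $hgh^{-1}$ maps to it.

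The heart of the argument is injectivity. Assume $\pi(h_1 g h_1^{-1}) = \pi(h_2 g h_2^{-1})$. Then there exists $s \in S$, viewed as a scalar matrix in $\GLFl$, such that
\[ h_1 g h_1^{-1} = s \cdot h_2 g h_2^{-1}. \]
I want to conclude that $s = 1$. The key invariant is the determinant: both sides are conjugate to $g$, so taking determinants gives $\det(g) = s^2 \det(g)$, whence $s^2 = 1$ in $\F_\ell^*$.

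The conclusion now exploits precisely the defining property of $S$: since $\F_\ell^*$ is cyclic, its unique element of order $2$ is $-1$, but by definition $S$ does not contain $-1$, hence $S$ has no element of order $2$. Therefore $s^2 = 1$ forces $s = 1$, which gives $h_1 g h_1^{-1} = h_2 g h_2^{-1}$ and proves injectivity. The main subtlety is thus isolating the role of the hypothesis ``$S$ does not contain $-1$'' at exactly this point; without it the map $\pi_g$ would collapse pairs of distinct conjugates differing by a sign, which is precisely the sign ambiguity in projective representations discussed earlier in the paper.
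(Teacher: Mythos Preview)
Your proof is correct and follows essentially the same route as the paper's: well-definedness and surjectivity are immediate from $\pi$ being a surjective homomorphism, and injectivity comes from taking determinants of $h_1 g h_1^{-1} = s\, h_2 g h_2^{-1}$ to obtain $s^2 = 1$, then invoking $-1 \notin S$ to force $s = 1$. The only cosmetic difference is that the paper writes ``$\det s = 1$, hence $s = \pm 1$'' where you write ``$s^2 = 1$, hence $s$ has order dividing $2$,'' which is the same computation.
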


\begin{proof}
It is clear that the image of the conjugacy class of $g$ by $\pi$ is exactly the conjugacy class of $\widetilde{g}$, so that $\pi_g$ is well-defined and surjective. To show that $\pi_g$ is also injective, let $h_1$, $h_2 \in \GLFl$ such that $\pi(h_1 g h_1^{-1}) =  \pi(h_2 g h_2^{-1})$, that is to say such that $h_1 g h_1^{-1} = s h_2 g h_2^{-1}$ for some $s \in S$. We must prove that $h_1 g h_1^{-1} = h_2 g h_2^{-1}$. By taking determinants, we see that $\det s = 1$. As $s$ is scalar, this implies $s = \pm 1$. Since $-1 \not \in S$, we conclude that $s = 1$, and therefore $h_1 g h_1^{-1} = h_2 g h_2^{-1}$.
\end{proof}

A resolvent $\Gamma_{\widetilde C}(X)$ has therefore exactly the same degree as (each of) the corresponding $\Gamma_{C}(X)$, so we must still use the same very high precision in $\C$ to compute it. However, we have now $\vert S \vert$ times less such resolvents to compute. Furthermore, the roots $\sum_{i=1}^n h(a_i) \sigma(a_i)$ of these resolvents actually take $\vert S \vert^2$ less time to compute, as they are defined by sums $\vert S \vert$ times shorter and there are $\vert S \vert$ times less of them.

Using these resolvents $\Gamma_{\widetilde C}(X)$, we can then compute the conjugacy class of the image of the Frobenius element $\left( \frac{L/\Q}{p} \right)$ in $\widetilde{\GLFl}$ as above, and, since $-1 \not \in S$, we can deduce the similarity class of the image of the Frobenius element in $\GLFl$ using our knowledge of its determinant. Consequently, with this trick, we can still compute the full, non-quotient representation $\rho_{f,\mathfrak{l}}$, and we have saved a factor $\vert S \vert^2$ in the computation of the roots of the resolvent, and a factor $\vert S \vert$ in their expansion and in the identification of their coefficients as rational numbers. Since
\[ \vert S \vert = \frac{\ell-1}{2^{\ord_2(\ell-1)}}, \]
this prevents this final step of the Galois representation computation from being the slowest one, cf the \hyperlink{Complexity}{complexity section} after the results.

\newpage

\section{Results}

As the above algorithms compute the full Galois representation, we get results which are more complete than the ones from \cite{Bos}. For instance, picking $\ell = 19$ (which corresponds to genus $g = 7$), we can compute the Galois representation $\rho_{\Delta,19}$ modulo $19$ associated to the newform
\[ f = \Delta = q \prod_{n=1}^{+\infty} (1-q^n)^{24} = \sum_{n=1}^{+\infty} \tau(n) q^n \]
of weight $12$, find the similarity class in $GL_2(\F_{19})$ of the images of Frobenius elements, and hence find the signs which were missing in the table on the very first page of \cite{EC} :

\begin{enumerate}

\item[$\bullet$] The image of the Frobenius at $p = 10^{1000}+1357$ is similar to $\left[ \begin{array}{cc} 17 & 1 \\ 0 & 17  \end{array} \right]$, therefore $\tau(10^{1000}+1357) \equiv -4 \bmod 19$,

\item[$\bullet$] The image of the Frobenius at $p = 10^{1000}+7383$ is similar to $\left[ \begin{array}{cc} 1 & 1 \\ 0 & 1  \end{array} \right]$, therefore $\tau(10^{1000}+7383) \equiv +2 \bmod 19$,

\item[$\bullet$] The image of the Frobenius at $p = 10^{1000}+21567$ is similar to $\left[ \begin{array}{cc} 11 & 1 \\ 0 & 11  \end{array} \right]$, therefore $\tau(10^{1000}+21567) \equiv +3 \bmod 19$,

\item[$\bullet$] The image of the Frobenius at $p = 10^{1000}+27057$ is similar to $\left[ \begin{array}{cc} 10 & 0 \\ 0 & 9  \end{array} \right]$, therefore $\tau(10^{1000}+27057) \equiv 0 \bmod 19$,

\item[$\bullet$] The image of the Frobenius at $p = 10^{1000}+46227$ is similar to $\left[ \begin{array}{cc} 0 & 14 \\ 1 & 0  \end{array} \right]$, therefore $\tau(10^{1000}+46227) \equiv 0 \bmod 19$,

\item[$\bullet$] The image of the Frobenius at $p = 10^{1000}+57867$ is similar to $\left[ \begin{array}{cc} 17 & 0 \\ 0 & 2  \end{array} \right]$, therefore $\tau(10^{1000}+57867) \equiv 0 \bmod 19$,

\item[$\bullet$] The image of the Frobenius at $p = 10^{1000}+64749$ is similar to $\left[ \begin{array}{cc} 13 & 1 \\ 0 & 13  \end{array} \right]$, therefore $\tau(10^{1000}+64749) \equiv +7 \bmod 19$,

\item[$\bullet$] The image of the Frobenius at $p = 10^{1000}+68367$ is similar to $\left[ \begin{array}{cc} 14 & 0 \\ 0 & 5  \end{array} \right]$, therefore $\tau(10^{1000}+68367) \equiv 0 \bmod 19$,

\item[$\bullet$] The image of the Frobenius at $p = 10^{1000}+78199$ is similar to $\left[ \begin{array}{cc} 15 & 1 \\ 0 & 15  \end{array} \right]$, therefore $\tau(10^{1000}+78199) \equiv -8 \bmod 19$,

\item[$\bullet$] The image of the Frobenius at $p = 10^{1000}+128647$ is similar to $\left[ \begin{array}{cc} 0 & 8 \\ 1 & 0  \end{array} \right]$, therefore $\tau(10^{1000}+128647) \equiv 0 \bmod 19$.
\end{enumerate}

The surprising number of occurences of non-semi-simple matrices --- by the Chebotarev theorem, non-semi-simple matrices should occur with density about $1/\ell$ only --- and of $\tau(p) \equiv 0 \bmod 19$ above can be explained by the fact that J. Bosman purposedly chose special values of $p$ (cf \cite{CH7}, section 7.4). For instance, for the few other first primes above $10^{1000}$, we have computed the following:

\[ 
\begin{array}{|c|c|c|}
\hline
p & \text{Similarity class of } \vphantom{\bigg\vert} \left( \frac{L/\Q}{p} \right) & \tau(p) \bmod 19 \\
\hline
10^{1000}+ 453 & \left[ \begin{array}{cc} 15 & 0 \\ 0 & 10 \end{array} \right] \vphantom{\Bigg\vert} & 6 \\
\hline
10^{1000}+2713 & \left[ \begin{array}{cc} 11 & 0 \\ 0 & 4 \end{array} \right] \vphantom{\Bigg\vert} & 15 \\
\hline
10^{1000}+4351 & \left[ \begin{array}{cc} 6 & 0 \\ 0 & 4 \end{array} \right] \vphantom{\Bigg\vert} & 10 \\
\hline
10^{1000}+5733 & \left[ \begin{array}{cc} 16 & 0 \\ 0 & 1 \end{array} \right] \vphantom{\Bigg\vert} & 17 \\
\hline
10^{1000}+10401 & \left[ \begin{array}{cc} 0 & 15 \\ 1 & 8 \end{array} \right] \vphantom{\Bigg\vert} & 8 \\
\hline
10^{1000}+11979 & \left[ \begin{array}{cc} 16 & 0 \\ 0 & 13 \end{array} \right] \vphantom{\Bigg\vert} & 10 \\
\hline
10^{1000}+17557 & \left[ \begin{array}{cc} 0 & 5 \\ 1 & 11 \end{array} \right] \vphantom{\Bigg\vert} & 11 \\
\hline
10^{1000}+22273 & \left[ \begin{array}{cc} 13 & 0 \\ 0 & 1 \end{array} \right] \vphantom{\Bigg\vert} & 14 \\
\hline
10^{1000}+24493 & \left[ \begin{array}{cc} 14 & 0 \\ 0 & 10 \end{array} \right] \vphantom{\Bigg\vert} & 5 \\
\hline
10^{1000}+25947 & \left[ \begin{array}{cc} 0 & 4 \\ 1 & 5 \end{array} \right] \vphantom{\Bigg\vert} & 5 \\
\hline
10^{1000}+29737 & \left[ \begin{array}{cc} 0 & 12 \\ 1 & 7 \end{array} \right] \vphantom{\Bigg\vert} & 7 \\
\hline
10^{1000}+41599 & \left[ \begin{array}{cc} 18 & 0 \\ 0 & 15 \end{array} \right] \vphantom{\Bigg\vert} & 14 \\
\hline
\end{array} \]
This agrees with the Chebotarev theorem.

\newpage

We have implemented the above algorithms in \cite{SAGE}, version $5.3$, and have run them on PlaFRIM, the Bordeaux 1 university computing cluster. For $\ell = 19$, the computation times were as follows: computing the $q$-expansion of the cuspforms and the Eisenstein series $\big($to $O(q^{5000})\big)$, the period lattice, and finally initialising K. Khuri-Makdisi's algorithms by computing the spaces $V$ and $W_0$ took 11 minutes, computing the two $19$-torsion divisors took 24 minutes (each, but they were processed in parallel), and computing all the points in the $\F_{19}$-plane spanned by them took about 40 minutes. We found a polynomial $F(X) \in \Q[X]$ defining the representation, of degree $360 = 19^2-1$ and with a common denominator of $142$ decimal digits, and finally, computing the resolvents $\Gamma_{\widetilde C}(X)$ took a little less than $20$ minutes thanks to the \hyperlink{Quotrep_trick}{quotient representation trick} and to massive parallelisation, after which deducing the similarity classes of the image of a Frobenius element at $p \approx 10^{1000}$ takes about $30$ minutes. Overall, the whole computation thus lasted about $2$ hours. We led the computation of the defining polynomial $F(X)$ with a precision of $1500$ bits in $\C$, and we used a precision of $600$ kbits to compute the resolvents $\Gamma_{\widetilde C}(X)$.

\subsection*{Level $\ell = 23$ (genus $g = 12$)}

The Galois representation modulo $\ell = 23$ associated to $\Delta$ degenerates and has dihedral image $\mathfrak{S}_3$. This phenomenon is related to Ramanujan-type congruences for $\tau(n) \bmod 23$, cf \cite{CH1}, top of page 5. The prime $\ell = 23$ is indeed one of the finitely many primes we have to exclude, cf the \hyperlink{BadPrimes}{beginning of the introduction}. Consequently, we computed the representation associated to the newform $E_4 \Delta$ of level $1$ and weight $16$ instead. We got a defining polynomial $F(X)$ of degree $528 = 23^2-1$ with a common denominator of $508$ decimal digits. Computing the period lattice took a little less than $2$ hours, computing each of the two $23$-torsion divisors took $5$ hours and a half, and computing the $\F_{23}$-plane spanned by them took a little more than $11$ hours. Overall, getting the polynomial $F(X)$ took less than 20 hours.

\newpage

\subsection*{Level $\ell = 29$ (genus $g = 22$)}

We have also computed the polynomial $F(X)$ for the Galois representation modulo $\ell = 29$ associated to $\Delta$, which took about $10$ days. This polynomial has degree $840 = 29^2-1$, and a common denominator of $1793$ decimal digits. Computing the periods took a little more than $6$ hours, computing each of the two $29$-torsion divisors took $120$ hours, and computing the $\F_{29}$-plane spanned by them took about $100$ hours.

Then, thanks to the \hyperlink{Quotrep_trick}{quotient representation trick}, computing the resolvents $\Gamma_{\widetilde C}(X)$ took about $60$ hours, and finally, deducing the image of the Frobenius at the same primes $p \approx 10^{1000}$ as in level $19$ took $2$ hours. Overall, the whole computation thus took less than two weeks.

We led the computation of the defining polynomial $F(X)$ with a precision of $15$ kbits in $\C$, and we used a precision of $18$ Mbits to compute the resolvents $\Gamma_{\widetilde C}(X)$.

\bigskip

Our results are the following :

\[ 
\begin{array}{|c|c|c|}
\hline
p & \text{Similarity class of } \vphantom{\bigg\vert} \left( \frac{L/\Q}{p} \right) & \tau(p) \bmod 29 \\
\hline
10^{1000}+ 453 & \left[ \begin{array}{cc} 0 & 5 \\ 1 & 21 \end{array} \right] \vphantom{\Bigg\vert} & 21 \\
\hline
10^{1000}+1357 & \left[ \begin{array}{cc} 0 & 28 \\ 1 & 8 \end{array} \right] \vphantom{\Bigg\vert} & 8 \\
\hline
10^{1000}+2713 & \left[ \begin{array}{cc} 0 & 9 \\ 1 & 11 \end{array} \right] \vphantom{\Bigg\vert} & 11 \\
\hline
10^{1000}+4351 & \left[ \begin{array}{cc} 0 & 26 \\ 1 & 0 \end{array} \right] \vphantom{\Bigg\vert} & 0 \\
\hline
10^{1000}+5733 & \left[ \begin{array}{cc} 20 & 0 \\ 0 & 2 \end{array} \right] \vphantom{\Bigg\vert} & 22 \\
\hline
10^{1000}+7383 & \left[ \begin{array}{cc} 19 & 0 \\ 0 & 10 \end{array} \right] \vphantom{\Bigg\vert} & 0 \\
\hline
10^{1000}+10401 & \left[ \begin{array}{cc} 7 & 0 \\ 0 & 2 \end{array} \right] \vphantom{\Bigg\vert} & 9 \\
\hline
10^{1000}+11979 & \left[ \begin{array}{cc} 0 & 7 \\ 1 & 7 \end{array} \right] \vphantom{\Bigg\vert} & 7 \\
\hline
10^{1000}+17557 & \left[ \begin{array}{cc} 0 & 2 \\ 1 & 0 \end{array} \right] \vphantom{\Bigg\vert} & 0 \\
\hline
10^{1000}+21567 & \left[ \begin{array}{cc} 23 & 0 \\ 0 & 3 \end{array} \right] \vphantom{\Bigg\vert} & 26 \\
\hline
10^{1000}+22273 & \left[ \begin{array}{cc} 0 & 26 \\ 1 & 14 \end{array} \right] \vphantom{\Bigg\vert} & 14 \\
\hline
\end{array}
\]

\newpage

\[ 
\begin{array}{|c|c|c|}
\hline
p & \text{Similarity class of } \vphantom{\bigg\vert} \left( \frac{L/\Q}{p} \right) & \tau(p) \bmod 29 \\
\hline
10^{1000}+24493 & \left[ \begin{array}{cc} 0 & 13 \\ 1 & 4 \end{array} \right] \vphantom{\Bigg\vert} & 4 \\
\hline
10^{1000}+25947 & \left[ \begin{array}{cc} 27 & 0 \\ 0 & 15 \end{array} \right] \vphantom{\Bigg\vert} & 13 \\
\hline
10^{1000}+27057 & \left[ \begin{array}{cc} 0 & 6 \\ 1 & 7 \end{array} \right] \vphantom{\Bigg\vert} & 7 \\
\hline
10^{1000}+29737 & \left[ \begin{array}{cc} 23 & 0 \\ 0 & 10 \end{array} \right] \vphantom{\Bigg\vert} & 4 \\
\hline
10^{1000}+41599 & \left[ \begin{array}{cc} 13 & 0 \\ 0 & 5 \end{array} \right] \vphantom{\Bigg\vert} & 18 \\
\hline
10^{1000}+46227 & \left[ \begin{array}{cc} 0 & 26 \\ 1 & 22 \end{array} \right] \vphantom{\Bigg\vert} & 22 \\
\hline
10^{1000}+57867 & \left[ \begin{array}{cc} 13 & 0 \\ 0 & 11 \end{array} \right] \vphantom{\Bigg\vert} & 24 \\
\hline
10^{1000}+64749 & \left[ \begin{array}{cc} 0 & 1 \\ 1 & 15 \end{array} \right] \vphantom{\Bigg\vert} & 15 \\
\hline
10^{1000}+68367 & \left[ \begin{array}{cc} 0 & 3 \\ 1 & 3 \end{array} \right] \vphantom{\Bigg\vert} & 3 \\
\hline
10^{1000}+78199 & \left[ \begin{array}{cc} 17 & 0 \\ 0 & 14 \end{array} \right] \vphantom{\Bigg\vert} & 2 \\
\hline
10^{1000}+128647 & \left[ \begin{array}{cc} 0 & 27 \\ 1 & 24 \end{array} \right] \vphantom{\Bigg\vert} & 24 \\
\hline
\end{array} \]

\bigskip

Putting together the results in level $\ell = 19$ and $\ell = 29$, we see that $\tau(p) \neq 0$ for each value of $p$ we have tested. This agrees with Lehmer's conjecture.

\newpage

\subsection*{Complexity estimates}

Clearly, the most time-consuming part of the computation of the polynomial $F(X) \in \Q[X]$ defining the representation is the arithmetic in the jacobian $J_1(\ell)$. K. Khuri-Makdisi's algorithms rely on linear algebra on matrices of size $O(g) \times O(g)$; as $g = O(\ell^2)$, and we have $O(\ell^2)$ points to compute in the jacobian, this implies a complexity of $O(\ell^8)$ operations in $\C$ to compute the Galois representation. Let $H$ be the logarithm of the common denominator of $F(X)$, so that computing $F(X)$ with our method requires a precision of $O(H)$ bits in $\C$. Then the complexity of our method to find $F(X)$ is $\widetilde O(\ell^8 H)$ bit operations. The experiments we have run seem to indicate that $H$ is $O(\ell^3)$, but we do not try to refine this estimate, because we do not know a proven sharp bound on $H$.

\hypertarget{Complexity} Next, if we do not use the \hyperlink{Quotrep_trick}{quotient representation trick}, computing a root $\sum_{i = 1}^n h(a_i) \sigma(a_i)$ of a Dokchitsers' resolvent $\Gamma_C(X)$ requires $O(n) = O(\ell^2)$ operations in $\C$. As there is one such root for each $\sigma \in \GLFl$, computing all these roots requires $O(\ell^6)$ operations in $\C$. Then, computing a resolvent $\Gamma_C(X)$ from its roots requires $\widetilde O\big(\deg \Gamma_C(X)\big) = \widetilde O(\ell^2)$ operations in $\C$ using a fast Fourier transform. As there are $O(\ell^2)$ similarity classes in $\GLFl$, we see that computing all the resolvents $\Gamma_C(X)$ from their roots requires $\widetilde O(\ell^4)$ operations in $\C$. Thus computing all the resolvents overall requires $O(\ell^6)$ operations in $\C$, the slow part being the computation of their roots. The precision in $\C$ we have to work at for this is $O(\ell^2 H)$, so that the total complexity of the computation of the resolvents $\Gamma_C(X)$ is $\widetilde O(\ell^{8} H)$ bit operations, which is the same as the rest of the computation.

However, with the  \hyperlink{Quotrep_trick}{quotient representation trick}, computing the resolvent roots $\sum_{i = 1}^n h(a_i) \sigma(a_i)$ requires only $O(\ell^6/ \vert S \vert^2) = O(\ell^4 \ell_2^2)$ operations in $\C$, where $\ell_2 = 2^{\ord_2(\ell-1)}$ is the $2$-primary part of $\ell$, and then computing the resolvents $\Gamma_{\widetilde{C}}(X)$ from these roots takes only $\widetilde O(\ell^4 / \vert S \vert) = \widetilde O(\ell^3 \ell_2)$ operations in $\C$. Therefore, in the good cases, that is if $\ell_2$ is bounded\footnote{For instance, this is the case if we restrict to the primes $\ell \equiv -1 \bmod 4$. Note that such $\ell$ are enough to compute the coefficients $a_p$ by Chinese remainders from the $a_p \bmod \mathfrak{l}$ for various $\ell$ without worsening the complexity of this method.}, the use of the quotient representation trick allows us to bring the complexity of the computation of the resolvents $\Gamma_{\widetilde{C}}(X)$ down to $\widetilde O(\ell^6 H)$ bit operations, making it $O(\ell^2)$ times faster than the rest of the computation. On the other hand, this trick is unfortunately totally inefficient in the worst cases $\ell = 2^\lambda + 1$, $\lambda \in \N$.

\newpage


\begin{thebibliography}{ABC12}

\bibitem[Abr96]{Abram} Abramovich, Dan,
\textbf{A linear lower bound on the gonality of modular curves}.
Internat. Math. Res. Notices 1996, no. 20, 1005–1011.

\bibitem[AG90]{NumCont} Allgower, Eugene L.; Georg, Kurt,
\textbf{Introduction to numerical continuation methods}.
Reprint of the 1990 edition [Springer-Verlag, Berlin; MR1059455 (92a:65165)]. Classics in Applied Mathematics, 45. Society for Industrial and Applied Mathematics (SIAM), Philadelphia, PA, 2003. xxvi+388 pp. ISBN:0-89871-544-X.

\bibitem[AL78]{AtLi}
Atkin, A. O. L.; Li, Wen Ch'ing Winnie,
\textbf{Twists of newforms and pseudo-eigenvalues of $W$-operators}.
Invent. Math. 48 (1978), no. 3, 221–243. 

\bibitem[Bos07]{Bos}
Bosman, Johan,
\textbf{On the computation of Galois representations associated to level one modular forms}.
\href{http://arxiv.org/abs/0710.1237}{arXiv:0710.1237}

\bibitem[Bos07a]{BosGal1}
Bosman, Johan,
\textbf{A polynomial with Galois group $SL_2(\F_{16})$}.
LMS J. Comput. Math. 10 (2007), 1461--1570.

\bibitem[Bos11]{BosGal2}Bosman, Johan,
\textbf{Modular forms applied to the computational inverse Galois problem}.
\href{http://arxiv.org/abs/1109.6879}{arXiv:1109.6879}

\bibitem[BosC6]{CH6}Bosman, Johan,
\textbf{Computations with modular forms and Galois representations}.
Chapter 6 of the book \cite{EC}.

\bibitem[BosC7]{CH7}Bosman, Johan,
\textbf{Polynomials for projective representations of level one forms}.
Chapter 7 of the book \cite{EC}.


\bibitem[CEC3]{CH3}Couveignes, Jean-Marc; Edixhoven, Bas,
\textbf{First description of the algorithms}.
Chapter 3 of the book \cite{EC}.

\bibitem[CouC12]{CH12}Couveignes, Jean-Marc,
\textbf{Approximating $V_f$ over the complex numbers}.
Chapter 12 of the book \cite{EC}.

\bibitem[CouC13]{CH13}Couveignes, Jean-Marc,
\textbf{Computing $V_f$ modulo $p$}.
Chapter 13 of the book \cite{EC}.

\bibitem[Cre97]{Cremona} Cremona, J. E.,
\textbf{Algorithms for modular elliptic curves}.
Second edition. Cambridge University Press, Cambridge, 1997. vi+376 pp. ISBN: 0-521-59820-6.

\bibitem[CR62]{CuRe} Curtis, Charles W.; Reiner, Irving,
\textbf{Representation theory of finite groups and associative algebras}.
Pure and Applied Mathematics, Vol. XI Interscience Publishers, a division of John Wiley \& Sons, New York-London. 1962, xiv+685 pp.

\bibitem[Del71]{Del} Deligne, Pierre, \textbf{Formes modulaires et repr\'esentations} $l$\textbf{-adiques}. 
Lecture Notes in Math. 179 (1971), pp 139--172.

\bibitem[Dok10]{Dok} Dokchitser, Tim and Vladimir, \textbf{Identifying Frobenius elements in Galois groups}. September 2010 preprint, to appear in Algebra and Number Theory.

\bibitem[DS05]{DS} Diamond, Fred; Shurman, Jerry,
\textbf{A first course in modular forms}.
Graduate Texts in Mathematics, 228. Springer-Verlag, New York, 2005. xvi+436 pp. ISBN: 0-387-23229-X.

\bibitem[EC11]{EC} \textbf{Computational aspects of modular forms and Galois representations}.
Edited by Bas Edixhoven and Jean-Marc Couveignes, with contributions by Johan Bosman, Jean-Marc Couveignes, Bas Edixhoven, Robin de Jong, and Franz Merkl.
Ann. of Math. Stud., 176, Princeton Univ. Press, Princeton, NJ, 2011. 

\bibitem[Edi92]{Edi} Edixhoven, Bas,
\textbf{The weight in Serre's conjectures on modular forms}.
Invent. Math. 109 (1992), no. 3, 563--594.

\bibitem[EdiC1]{CH1} Edixhoven, Bas,
\textbf{Introduction, main results, contexts}.
Chapter 1 of the book \cite{EC}.

\bibitem[EdiC14]{CH14} Edixhoven, Bas,
\textbf{Computing the residual Galois representations}.
Chapter 14 of the book \cite{EC}.

\bibitem[GG99]{Gathen} von zur Gathen, Joachim; Gerhard, J\"urgen,
\textbf{Modern computer algebra}. Cambridge University Press, New York, 1999. xiv+753 pp. ISBN: 0-521-64176-4.

\bibitem[Gro90]{Gross} Gross, Benedict H.,
\textbf{A tameness criterion for Galois representations associated to modular forms (mod }$p$\textbf{)}.
Duke Math. J. 61 (1990), no. 2, 445--517.

\bibitem[HW08]{HW} Hardy, G. H.; Wright, E. M.,
\textbf{An introduction to the theory of numbers}.
Sixth edition. Revised by D. R. Heath-Brown and J. H. Silverman. With a foreword by Andrew Wiles. Oxford University Press, Oxford, 2008. xxii+621 pp. ISBN: 978-0-19-921986-5.

\bibitem[HS00]{HS} Hindry, Marc; Silverman, Joseph H.,
\textbf{Diophantine geometry - An introduction}.
Graduate Texts in Mathematics, 201. Springer-Verlag, New York, 2000. xiv+558 pp. ISBN: 0-387-98975-7; 0-387-98981-1.
 
\bibitem[KM04]{Mak1} Khuri-Makdisi, Kamal,
\textbf{Linear algebra algorithms for divisors on an algebraic curve}.
Math. Comp. 73 (2004), no. 245, 333--357.
 
\bibitem[KM07]{Mak2} Khuri-Makdisi, Kamal,
\textbf{Asymptotically fast group operations on Jacobians of general curves}.
Math. Comp. 76 (2007), no. 260, 2213--2239.

\bibitem[Lan95]{Lang} Lang, Serge,
\textbf{Introduction to modular forms}.
With appendixes by D. Zagier and Walter Feit. Corrected reprint of the 1976 original. Grundlehren der Mathematischen Wissenschaften, 222. Springer-Verlag, Berlin, 1995. x+261 pp. ISBN: 3-540-07833-9.

\bibitem[Rib85]{RibSL2} Ribet, Kenneth A.,
\textbf{On} $l$\textbf{-adic representations attached to modular forms II}.
Glasgow Math. J. 27 (1985), 185–194. 


\bibitem[SAGE]{SAGE} \textbf{SAGE mathematics software}.
\url{http://sagemath.org/}

\bibitem[Sch95]{Schoof} Schoof, Ren\'e,
\textbf{Counting points on elliptic curves over finite fields}.
Les Dix-huiti\`emes Journ\'ees Arithm\'etiques (Bordeaux, 1993). J. Th\'eor. Nombres Bordeaux 7 (1995), no. 1, 219--254.

\bibitem[Ste07]{Stein} Stein, William,
\textbf{Modular forms, a computational approach}.
With an appendix by Paul E. Gunnells. Graduate Studies in Mathematics, 79. American Mathematical Society, Providence, RI, 2007. xvi+268 pp. ISBN: 978-0-8218-3960-7; 0-8218-3960-8.

\bibitem[Swi72]{SwD} Swinnerton-Dyer, H. P. F.,
\textbf{On} $\ell$\textbf{-adic representations and congruences for coefficients of modular forms}. Modular functions of one variable, III (Proc. Internat. Summer School, Univ. Antwerp, 1972), pp. 1--55. Lecture Notes in Math., Vol. 350, Springer, Berlin, 1973. 

\bibitem[Zen12]{Zeng} Zeng, Jinxiang,
\textbf{On the computation of coefficients of modular forms: the $p$-adic approach}.
\href{http://arxiv.org/abs/1211.1124}{arXiv:1211.1124}

\end{thebibliography}
\end{document}